\numberwithin{equation}{section}
\newtheorem{theo}{Theorem}
\newtheorem{prop}[theo]{Proposition}
\newtheorem{coro}[theo]{Corollary}
\newtheorem{lemm}[theo]{Lemma}
\theoremstyle{remark}
\newtheorem{rema}[theo]{\bf Remark}
\newtheorem{exam}[theo]{\bf Example}
\newtheorem{defi}[theo]{\bf Definition}
\title{Zapponi-orientable dessins d'enfants}
\author[E. Girondo, G. Gonz\'alez-Diez, R. A. Hidalgo, G. A. Jones]{Ernesto Girondo, Gabino Gonz\'alez-Diez, Rub\'en A. Hidalgo\\ and Gareth A. Jones}
\address[ernesto.girondo@uam.es]{{\sc Ernesto Girondo}: Departamento de Matem\'aticas, UAM and ICMAT, Madrid (Spain)}
\address[gabino.gonzalez@uam.es]{{\sc Gabino Gonz\'alez-Diez}: Departamento de Matem\'aticas, UAM, Madrid (Spain)}
\address[ruben.hidalgo@ufrontera.cl]{{\sc Ruben A. Hidalgo}: Departamento de Matem\'atica y Estad\'{\i}stica, Universidad de La Frontera.  Temuco (Chile)}
\address[G.A.Jones@maths.soton.ac.uk]{{\sc Gareth A. Jones}: School of Mathematics, University of Southampton, Southampton SO17 1BJ (UK)}
\subjclass[05C10, 05C25, 30F10]{14H57}
\keywords{Dessins d'enfants, bipartite graphs, Belyi functions, Galois invariants}
\begin{document}


\begin{abstract}
Almost two decades ago Zapponi introduced a notion of orientability of a clean dessin d'enfant, based on an orientation of the embedded bipartite graph. We extend this concept, which we call Z-orientability to distinguish it from the traditional topological definition, to the wider context of all dessins, and we use it to define a concept of twist orientability, which also takes account of the Z-orientability properties of those dessins obtained by permuting the roles of white and black vertices and face-centres. We observe that these properties are Galois-invariant, and we study the extent to which they are determined by the standard invariants such as the passport and the monodromy and automorphism groups. We find that in general they are independent of 
these invariants, but in the case of regular dessins they are determined by the monodromy group.
\end{abstract}


\maketitle

\section{Introduction}

According to a famous theorem of Belyi~\cite{Belyi}, a compact Riemann surface $S$, regarded as a complex projective algebraic curve, can be defined over the field $\overline{\mathbb Q}$ of algebraic numbers if and only if it admits a non-constant meromorphic function $\beta:S\to\widehat{\mathbb C}={\mathbb C}\cup\{\infty\}\simeq{\mathbb P}^1({\mathbb C})$ which is branched over at most three points (which we can take to be $0, 1$ and $\infty$). Such a function $\beta$ is called a {\em Belyi function}, and $(S,\beta)$ is known as a {\em Belyi pair}. The `certificate' for possessing such a function is the existence of a map embedding a bipartite graph in $S$, with the fibres $\beta^{-1}(0)$ and $\beta^{-1}(1)$ providing the white and black vertices, and the unit interval $I=[0,1]\subset{\mathbb R}$ lifting to the edges (the complement of the embedded graph is a disjoint union of discs, called the {\it faces} of the map). Grothendieck~\cite{Gro} called such a map $\mathcal D$ a {\em dessin d'enfant}, or simply a {\em dessin}, and noted that the natural action of the absolute Galois group ${\rm Gal}(\overline{\mathbb Q}/{\mathbb Q})$ on Belyi pairs induces a faithful action on (isomorphism classes of) dessins.

One of the major problems in this area is that of determining Galois orbits, or more specifically of determining whether or not two given dessins are in the same orbit of ${\rm Gal}(\overline{\mathbb Q}/{\mathbb Q})$. If they differ in some Galois-invariant property, such as their genus, type, monodromy group, automorphism group or passport, then we know that they are in different orbits. However, if they agree in these properties then nothing can be concluded, and we need to try further tests. For this reason it is useful to know and to be able to determine as many Galois invariants as possible, in order to have a more powerful toolkit in tackling this problem. Of course, a new invariant needs to be independent of the others (for instance, the passport determines the genus and the type, but not the monodromy or automorphism group), or to be more easily computed, in order to be of any use.

In~\cite{ Zapponi0, Zapponi} Zapponi introduced a new Galois invariant, which he called `orientability' (motivated by the graph-theoretic concept of an orientation of the embedded graph). Of course, the term `orientability' has a traditional meaning in topology, and in that sense all dessins are orientable since they lie on Riemann surfaces, so to avoid confusion we will use the term `Zapponi-orientability' for his concept. By abbreviating it to `Z-orientability' we will commemorate both his name and the special role played by the third standard generator $Z$ of a triangle group in its definition. Thus a dessin is said to be {\em Z-orientable} if its faces can be assigned labels $+$ or $-$ so that each edge is incident with faces having different labels. (In graph theory this is known as a 2-face colouring, so that the dual map is bipartite.) We will show that this property can be reinterpreted in a number of different ways, so that it can be studied in terms of function theory, group theory or combinatorics. One of our main results is as follows, where $M$ denotes the monodromy group of $\mathcal D$ (that is, of the covering $\beta:S\to\widehat{\mathbb C}$) and $\sigma_0$, $\sigma_1$ and $\sigma_{\infty}$, with $\sigma_0\sigma_1\sigma_{\infty}=1$, are its standard generators, giving the monodromy permutations of the edges (equivalently, the sheets of the covering) at the critical values.

\medskip

\begin{theo}\label{mainthm}
Each of the following properties of a dessin $\mathcal D$ is equivalent to its Z-orientability:
\begin{enumerate}
\item The function $\beta(1-\beta)$ is the square of a meromorphic function on $S$.
\item The edges of $\mathcal D$ can be labelled $+$ or $-$ so that successive edges around each vertex have different signs.
\item $M$ acts imprimitively on the edges, with two blocks transposed by $\sigma_0$ and $\sigma_1$.
\item $M$ has a subgroup of index $2$ containing $\sigma_{\infty}$ and the stabiliser of each edge.
\item $\mathcal D$ covers the unique dessin of degree $2$ and type $(2,2,1)$.
\end{enumerate}
\end{theo}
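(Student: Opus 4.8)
My plan is to prove the cyclic chain of implications
\[
\text{Z-orientable}\ \Longrightarrow\ (2)\ \Longrightarrow\ (3)\ \Longleftrightarrow\ (4)\ \Longrightarrow\ (5)\ \Longrightarrow\ (1)\ \Longrightarrow\ \text{Z-orientable},
\]
using throughout the standard correspondence between a dessin $\mathcal D$, the transitive action of $F=\pi_1(\widehat{\mathbb C}\setminus\{0,1,\infty\})=\langle x,y,z\mid xyz=1\rangle$ on the edge set $\Omega=\beta^{-1}(x_0)$ (with $x,y,z$ acting as $\sigma_0,\sigma_1,\sigma_\infty$ through the quotient $F\twoheadrightarrow M$), and the conjugacy class of an edge-stabiliser in $F$. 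For the first implication, suppose $\mathcal D$ carries a $2$-face-colouring $c$. Using the orientation of $S$, assign to each edge $e$ the sign $s(e):=c(F_e)$, where $F_e$ is the face on a fixed side --- say the left --- of $e$ as it is traversed from its white to its black end. Two edges $e,\sigma_0 e$ consecutive around a white vertex share a face corner lying on opposite sides of the two edges, so $c$ forces $s(\sigma_0 e)=-s(e)$; the analogous statement at black vertices gives $s(\sigma_1 e)=-s(e)$. Hence successive edges at every vertex receive opposite signs, which is (2).

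Condition (2) says precisely that there is a function $s\colon\Omega\to\{\pm1\}$ with $s\circ\sigma_0=s\circ\sigma_1=-s$; its fibres $B_\pm=s^{-1}(\pm1)$ are nonempty and interchanged by $\sigma_0$ and $\sigma_1$, hence (as $M=\langle\sigma_0,\sigma_1\rangle$) form a block system, which is (3), and a block system as in (3) manifestly yields such an $s$. Given (3), the setwise stabiliser $N$ of a block has index $2$, contains $\sigma_\infty=(\sigma_0\sigma_1)^{-1}$ because $\sigma_0\sigma_1$ fixes each block, and contains every edge-stabiliser because an element fixing an edge fixes its block: this is (4). Conversely, from an index-$2$ subgroup $N$ as in (4) one obtains the transitive $M$-set $M/N$ of size $2$ together with an $M$-equivariant surjection $\Omega\to M/N$ whose fibres form a block system; here $\sigma_0,\sigma_1\notin N$, for otherwise $\sigma_\infty\in N$ and $\sigma_0\sigma_1\sigma_\infty=1$ would force $M\le N$, so $\sigma_0$ and $\sigma_1$ interchange the two blocks, recovering (3).

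For $(3)\Rightarrow(5)$: the surjection $\Omega\to\{B_+,B_-\}$ is $F$-equivariant and hence a morphism (equivalently, a covering) of dessins onto the dessin whose monodromy is generated by the transposition of two edges (playing the roles of $\sigma_0$ and of $\sigma_1$) with $\sigma_\infty$ trivial --- this is the unique dessin $\mathcal D_2$ of degree $2$ and type $(2,2,1)$, realised by $\pi(u,v)=u$ on the smooth rational curve $C\colon v^2=u(1-u)$ --- so (5) holds. For $(5)\Rightarrow(1)$: if $\beta=\pi\circ\phi$ with $\phi\colon S\to C$ a covering, then $\beta(1-\beta)=(\pi\circ\phi)(1-\pi\circ\phi)=(v^2)\circ\phi=(v\circ\phi)^2$. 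Finally $(1)\Rightarrow$ Z-orientability: if $\beta(1-\beta)=f^2$ on $S$, then $(\beta,f)$ defines $\phi\colon S\to C$ with $\pi\circ\phi=\beta$, a covering $\mathcal D\to\mathcal D_2$; since $\phi$ restricts to an honest covering over the interior of each edge, the two faces of $\mathcal D$ meeting along any edge lie over the two \emph{distinct} faces of $\mathcal D_2$, so pulling back the evident $2$-face-colouring of $\mathcal D_2$ gives one on $\mathcal D$, closing the cycle.

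The step requiring the most care will be the equivalence of Z-orientability --- defined topologically by a $2$-colouring of faces --- with its combinatorial form (2): one must fix orientation conventions precisely enough to match ``consecutive edges at a vertex bound oppositely coloured faces'' with ``$\sigma_0$ and $\sigma_1$ reverse $s$'', and, going back, verify that a sign labelling really does produce a consistent face-colouring, which uses that in a bipartite map every face boundary has even length so that no parity obstruction arises around a face. Everything else should be formal once the dictionary between dessins, $F$-sets and subgroups is in place. One point worth recording is that (1) is strictly stronger than the requirement that every vertex of $\mathcal D$ have even valency (which only forces $\operatorname{div}\bigl(\beta(1-\beta)\bigr)$ to have even order everywhere): on a surface of positive genus the associated half-divisor need not be principal, and indeed there are genus-$1$ dessins of degree $4$ with all vertex valencies even that fail to be Z-orientable.
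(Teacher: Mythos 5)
Your proposal is correct, but for the key equivalence with (1) it takes a genuinely different route from the paper's. The paper splits the theorem in two: it proves Z-orientability $\Leftrightarrow$ (1) analytically (Theorem \ref{th:squarecharact}), by observing that $\beta(1-\beta)$ is a square iff the quadratic differential $d\beta^{\otimes 2}/(\beta(1-\beta))$ is the square of a $1$-form $\eta$, computing $\mathrm{res}_p\eta=\pm im$ at each face centre as a sum of edge integrals each equal to $\pm\pi$, and reading the face signs off these residues (with the converse built by patching local square roots); the combinatorial equivalences (2)--(5) are then dealt with separately in Proposition \ref{equivalent}, essentially by the same label transfers you use, and Zapponi's criterion is obtained via a fibre product involving $\pi_1(z)=(z+1)^2/4z$. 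You instead close everything into a single cycle through the degree-$2$ dessin: the $2$-face-colouring gives the alternating edge labelling, hence the index-$2$ subgroup and a covering of the bigon, hence a factorisation $\beta=\pi\circ\varphi$ with $\pi(u,v)=u$ on $v^2=u(1-u)$, so $\beta(1-\beta)=(v\circ\varphi)^2$; conversely a square root of $\beta(1-\beta)$ yields $\varphi$ and you pull back the bigon's $2$-face-colouring, using that $\varphi$ is unramified at interior edge points. Your conic is precisely a model of the paper's $\pi_1$ (its identity $Q_1\circ\pi_1=Q_2\circ\pi_2$ is your $u(1-u)=v^2$), so your argument is close in spirit to the paper's proof of Proposition \ref{prop_zapponi} and its subsequent remark that Z-orientability amounts to $\beta$ factoring through $\pi_1$; what it buys is a uniform, more elementary proof of the whole theorem that avoids the residue computation, at the cost of leaning on the dictionary between coverings of dessins and factorisations of Belyi maps (which the paper also invokes without proof) and of losing the explicit $1$-form and the residue interpretation of the face signs. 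Two small points are worth making explicit: in the step ``Z-orientable $\Rightarrow$ (2)'' you use that the two sides of every edge lie in distinct, oppositely coloured faces, i.e.\ that Z-orientability excludes monofacial edges (the paper records this separately); and in the literal reading of (3) the degree-$2$ dessin itself is a degenerate case, since there the two blocks are singletons --- an infelicity of the statement shared with the paper rather than a defect of your argument.
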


\medskip

Part (1) of Theorem \ref{mainthm} will be proved in Section \ref{Sec:orientable}, whereas the remaining parts will be proved in Section \ref{alternative}. It follows immediately from part (1) of the above theorem that the Z-orientability or otherwise of a dessin $\mathcal D$ is a Galois invariant. As we will show by example, it is independent of the more obvious and widely used Galois invariants such as the monodromy group, automorphism group and passport, so it represents an additional tool which can be used in trying to distinguish Galois orbits.

The traditional choice of using the fibres over $0$ and $1$, but not $\infty$, as vertices of the dessin is often convenient, but it has the disadvantage of disguising the complete symmetry of the situation. To rectify this, we will also consider the dessins ${\mathcal D}'$ and ${\mathcal D}''$ which can be obtained from $\mathcal D$ by cyclically permuting the roles of the three critical values of $\beta$ (of course, simply transposing vertex colours has no effect on Z-orientability). There are obvious analogues of Theorem~\ref{mainthm} for these `twisted' dessins, representing a further refinement of Z-orientability as a useful Galois invariant. We will introduce a new Galois invariant, the {\em twist-invariant orientability type} ${\rm tot}({\mathcal D})$, which is the number of Z-orientable dessins among the triple  $\{{\mathcal D}, {\mathcal D}', {\mathcal D}''\}$, and we will show in Proposition~\ref{totinvariant} that it can take the values $0, 1$ or $3$, but never $2$.

In order to show that the Z-orientability properties of a dessin are independent of the more traditional Galois invariants mentioned above, in our final section we will construct, for each integer $n\ge 8$, four uniform dessins (see Section \ref{Sec:dessins} for the definition of uniform dessin) ${\mathcal D}_i\;(i=0,1,2, 3)$ with the same degree $N=(n!/4!)^2$, the same type, the same monodromy group $M = S_n\times S_n$
(as an abstract group) and the same passport; these dessins also have isomorphic (but non-conjugate) edge-stabilisers and isomorphic automorphism groups. However, we will also show that ${\rm tot}({\mathcal D}_{0})=0$, ${\rm tot}({\mathcal D}_{1})={\rm tot}({\mathcal D}_{2})=1$, ${\rm tot}({\mathcal D}_{3})=3$, and that ${\mathcal D}_2$ is Z-orientable whereas ${\mathcal D}_1$ is not, so the four dessins ${\mathcal D}_i$ lie in distinct Galois orbits. Thus Z-orientability properties can distinguish between these orbits, whereas the other Galois invariants listed above can not. In particular, this example proves the following:

\medskip
\begin{theo}\label{independent}
The Galois invariant $\rm tot$ of a dessin is independent of its monodromy group, point stabilisers and automorphism group (as abstract groups) and of its passport.
\end{theo}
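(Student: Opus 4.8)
The plan is to prove the theorem by an explicit construction: for every $n\ge 8$ I would exhibit four uniform dessins agreeing on all of the listed invariants but having four different values of the pair "$\mathrm{tot}(\mathcal D)$ together with the Z-orientability of $\mathcal D$ itself"; since both Z-orientability and $\mathrm{tot}$ are Galois invariants, this forces four distinct Galois orbits. The first move is to reformulate everything group-theoretically. By Theorem~\ref{mainthm}(4) a dessin is Z-orientable exactly when its monodromy group $M$ has an index-$2$ subgroup containing $\sigma_\infty$ and all edge-stabilisers; applying this to the cyclically twisted dessins $\mathcal D',\mathcal D''$ shows that $\mathrm{tot}(\mathcal D)$ is the number of $\sigma\in\{\sigma_0,\sigma_1,\sigma_\infty\}$ for which $M$ has an index-$2$ subgroup containing $\sigma$ together with every edge-stabiliser. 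So the task becomes to manufacture monodromy data realising prescribed values of this count.

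The dessins will be built as products. First I would pick two uniform dessins $\mathcal E,\mathcal E'$ of monodromy group $S_n$ and degree $n!/4!$, with generating triples $(\alpha,\beta,\gamma)$ and $(\alpha',\beta',\gamma')$ having sign vectors in $S_n$ equal to $(\mathrm{sgn}\,\alpha,\mathrm{sgn}\,\beta,\mathrm{sgn}\,\gamma)=(+,-,-)$ and $(\mathrm{sgn}\,\alpha',\mathrm{sgn}\,\beta',\mathrm{sgn}\,\gamma')=(-,-,+)$ respectively. Then I would form the product monodromy data $(\alpha\times\alpha',\,\beta\times\beta',\,\gamma\times\gamma')$ acting on a product of two degree-$(n!/4!)$ coset spaces. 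By Goursat's lemma, together with $\mathrm{Out}(S_n)=1$ for $n\ge 8$, the monodromy group of this product is the whole of $S_n\times S_n$: both factor triples generate $S_n$, their distinct sign vectors preclude their being related by conjugacy (hence by any automorphism), and since $\alpha\times\alpha'$ has one even and one odd coordinate the group is not contained in the "diagonal" index-$2$ subgroup $D=\{(g,h):\mathrm{sgn}\,g=\mathrm{sgn}\,h\}$, the only remaining proper subdirect possibility. The four dessins $\mathcal D_0,\dots,\mathcal D_3$ are then this same product equipped with four different edge-stabilisers $K_i=H_1^{(i)}\times H_2^{(i)}$, assembled from two non-conjugate copies $H^{+}\subseteq A_n$ and $H^{-}\not\subseteq A_n$ of $S_4$ in $S_n$: take $K_0=H^{-}\times H^{-}$, $K_1=H^{+}\times H^{-}$, $K_2=H^{-}\times H^{+}$, $K_3=H^{+}\times H^{+}$.

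Two verifications then remain. (i) The listed invariants coincide: the $K_i$ are pairwise non-conjugate in $S_n\times S_n$ (conjugation cannot interchange the direct factors, and $H^{+}\not\sim H^{-}$ since only $H^{+}$ lies in $A_n$), yet all are isomorphic to $S_4\times S_4$; since $N_{S_n\times S_n}(H_1\times H_2)=N_{S_n}(H_1)\times N_{S_n}(H_2)$, the automorphism group $\mathrm{Aut}(\mathcal D_i)$ is independent of $i$ provided $H^{+}$ and $H^{-}$ are chosen with isomorphic normalizer quotients in $S_n$ (for instance as the two embeddings $S_4\hookrightarrow S_6\le S_n$ interchanged by the outer automorphism of $S_6$); and the passport, hence also the type, is independent of $i$ provided the generators of $\mathcal E,\mathcal E'$ are chosen so that no nontrivial power of any of them is conjugate into $H^{\pm}$, which forces the cycle structure of each generator on $S_n/H^{+}$ to equal that on $S_n/H^{-}$. (ii) The remaining invariants separate the four: the index-$2$ subgroups of $S_n\times S_n$ are $A_n\times S_n$, $S_n\times A_n$ and $D$; a product $H_1\times H_2$ lies in one of them exactly when $H_1\subseteq A_n$ or $H_2\subseteq A_n$, while $(\gamma_1,\gamma_2)$ lies in a given one of them according to the parities of $\gamma_1,\gamma_2$. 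Running the reformulated criterion through the four $K_i$ with the two sign vectors above yields $\mathrm{tot}(\mathcal D_0)=0$, $\mathrm{tot}(\mathcal D_3)=3$, $\mathrm{tot}(\mathcal D_1)=\mathrm{tot}(\mathcal D_2)=1$, the unique qualifying generator being $\sigma_\infty$ for $\mathcal D_2$ (so $\mathcal D_2$ is Z-orientable) and $\sigma_0$ for $\mathcal D_1$ (so $\mathcal D_1$ is not). Since Z-orientability, and hence $\mathrm{tot}$, is Galois-invariant, the four dessins lie in four distinct Galois orbits, which is the assertion.

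I expect the main obstacle to be the uniform-in-$n$ existence of the factor dessins $\mathcal E,\mathcal E'$: one must produce generating triples of $S_n$ (generating $S_n$, not $A_n$, and transitive — indeed primitive — on $n$ points) with the two prescribed sign vectors, with cycle types whose nontrivial powers avoid conjugacy into $H^{\pm}$, and with product the identity. Existence of a triple with given cycle types and product $1$ generating a transitive group follows from the Riemann existence theorem as soon as the Riemann--Hurwitz count gives non-negative genus for the associated degree-$n$ cover; the delicate part is to arrange all of this simultaneously for every $n\ge 8$ — in particular to use cycle types whose parities do not flip with $n\bmod 2$ — while still pinning the generated group down as $S_n$ via Jordan's theorem plus the presence of an odd element, and then confirming by Goursat that the product monodromy is the full $S_n\times S_n$ rather than $D$ or a graph of an automorphism. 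A subsidiary but genuine check is the explicit computation in $S_6$ showing that $H^{+}$ and $H^{-}$ have isomorphic normalizer quotients, so that the groups $\mathrm{Aut}(\mathcal D_i)$ truly agree.
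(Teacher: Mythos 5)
Your overall strategy is the paper's: take $M=S_n\times S_n$, build the four edge-stabilisers from the two non-conjugate transitive copies of $S_4$ in $S_6\le S_n$ (one inside $A_n$, one not), and read off ${\rm tot}=0,1,1,3$ together with the Z-orientability split between ${\mathcal D}_1$ and ${\mathcal D}_2$ from the three index-two subgroups of $M$; the matching of stabilisers, automorphism groups (via equal normalisers) and passports (via uniformity, i.e.\ no nontrivial power of a generator conjugate into a stabiliser) is also the same. The genuine gap is the one you flag yourself: you never actually produce the factor triples $(\alpha,\beta,\gamma)$ and $(\alpha',\beta',\gamma')$ with the prescribed sign vectors, product $1$, generating $S_n$, \emph{and} such that no nontrivial power of any generator is conjugate into $H^{\pm}$ -- and without this last condition the four dessins need not be uniform, so the claim that they share a passport collapses (the paper's remark on $n=6,7$ shows exactly how the passports then diverge). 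Appealing to the Riemann existence theorem gives triples with prescribed cycle types, but it does not by itself control conjugacy of their powers into the chosen $S_4$'s for all $n\ge 8$. The paper closes precisely this hole by an explicit choice: $x=(1,2,\ldots,n)$, $y=(1,2)$, $z=(xy)^{-1}$, with $m_0=(y,z)$, $m_1=(z,x)$, $m_\infty=(x,y)$; generation of $S_n\times S_n$ is then checked directly by exhibiting suitable powers lying in each direct factor (Lemma~\ref{generate}), with no need for Goursat or ${\rm Out}(S_n)=1$, and uniformity follows from a fixed-point count: nontrivial powers of $x,y,z$ fix $0$, $n-2$ or $1$ points, while nontrivial elements of $A$ or $B$ fix $n-6$ or $n-4$, which are distinct for $n\ge 8$ (Lemma~\ref{uniform}). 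Your conditional computation of the ${\rm tot}$ values via the three index-two subgroups is correct and is in substance Lemma~\ref{le:condition}.

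A smaller but genuine error: you propose to take $H^{+},H^{-}$ as ``the two embeddings $S_4\hookrightarrow S_6$ interchanged by the outer automorphism of $S_6$.'' They are not: the outer automorphism sends the transitive copies $P,Q$ of $S_4$ to \emph{intransitive} subgroups of $S_6$, as the paper notes. What actually makes the automorphism groups agree is that $P$ and $Q$ have the \emph{same} normaliser $G\cong S_4\times S_2$ in $S_6$ (both being normal of index $2$ in $G$, which is maximal in $S_6$), whence $N_{S_n}(A)/A\cong N_{S_n}(B)/B\cong S_2\times S_{n-6}$ and ${\rm Aut}({\mathcal D}_i)\cong(S_2\times S_{n-6})\times(S_2\times S_{n-6})$ for all $i$. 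So the ``subsidiary check'' you defer is both necessary and resolved differently from the mechanism you suggest.
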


\medskip

However, in Section \ref{tot} we will prove the following (see Corollary \ref{teo16}):

\medskip
\begin{theo}
For regular dessins the invariant ${\rm tot}$ is determined by the monodromy group (as an abstract group).
\end{theo}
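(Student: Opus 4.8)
The plan is to deduce the statement from part~(4) of Theorem~\ref{mainthm}, reducing it to a count that can be evaluated by elementary linear algebra over $\mathbb{F}_2$. Let $\mathcal D$ be a regular dessin, so its monodromy group $M$ acts regularly on the edges; in particular every edge-stabiliser is trivial. Part~(4) of Theorem~\ref{mainthm} then reads simply: $\mathcal D$ is Z-orientable if and only if $M$ has a subgroup of index $2$ containing $\sigma_\infty$. Writing the regular dessin as a generating triple $(a,b,c)$ of $M$ with $abc=1$, so that $\sigma_0,\sigma_1,\sigma_\infty$ act by translation by $a,b,c$, the twisted dessins $\mathcal D'$ and $\mathcal D''$ correspond to the cyclically permuted triples $(b,c,a)$ and $(c,a,b)$ (still valid, since $abc=1$ forces $bca=cab=1$), so their ``$\sigma_\infty$''-generators are $a$ and $b$. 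Hence $\mathrm{tot}(\mathcal D)$ equals the number of members of the multiset $\{a,b,c\}$ that lie in some subgroup of index $2$ of $M$.

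Next I would reformulate the condition ``$g$ lies in some subgroup of index $2$''. Such subgroups are exactly the kernels of the surjections $M\to C_2$, equivalently the hyperplanes of the $\mathbb{F}_2$-vector space $W:=M^{\mathrm{ab}}\otimes\mathbb{F}_2$; writing $\bar g$ for the image of $g$ in $W$, the element $g$ lies in some index $2$ subgroup precisely when $\bar g$ lies in some hyperplane of $W$, that is, when $\bar g=0$ or $\dim_{\mathbb{F}_2}W\ge 2$. Put $d:=\dim_{\mathbb{F}_2}W$; this depends only on the abstract isomorphism type of $M$.

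The proof then splits into three cases. If $d=0$, then $M$ has no subgroup of index $2$, so none of $\mathcal D,\mathcal D',\mathcal D''$ is Z-orientable and $\mathrm{tot}(\mathcal D)=0$. If $d\ge 2$, then every element of $M$ lies in some index $2$ subgroup, so all three dessins are Z-orientable and $\mathrm{tot}(\mathcal D)=3$. If $d=1$, then $M$ has a unique subgroup $N$ of index $2$, and $g$ lies in an index $2$ subgroup iff $g\in N$; the quotient $M\to M/N\cong C_2$ sends $(a,b,c)$ to a triple summing to zero (because $abc=1$), so the number of $a,b,c$ lying outside $N$ is even, and it is not $0$ since $a,b,c$ generate $M\ne N$; therefore exactly one of $a,b,c$ lies in $N$, and $\mathrm{tot}(\mathcal D)=1$. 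In all cases $\mathrm{tot}(\mathcal D)$ is determined by $d$, hence by $M$ as an abstract group; explicitly, $\mathrm{tot}(\mathcal D)$ equals $0$, $1$ or $3$ according as $M$ has $0$, $1$ or at least two subgroups of index $2$. As a by-product this recovers Proposition~\ref{totinvariant} for regular dessins, the value $2$ being impossible by the parity argument.

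I do not anticipate a serious obstacle: the substantial ingredient, part~(4) of Theorem~\ref{mainthm}, is already in hand. The points requiring care are the correct specialisation of that criterion to the regular case (triviality of the edge-stabilisers removes everything but the condition on $\sigma_\infty$), the bookkeeping that the three relevant ``$\sigma_\infty$''-generators attached to $\mathcal D,\mathcal D',\mathcal D''$ are precisely $a,b,c$, and the short parity computation in $M/N\cong\mathbb{F}_2$ when $d=1$, which is the heart of the argument.
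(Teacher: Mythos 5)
Your proposal is correct and follows essentially the same route as the paper: there, Zapponi's criterion (equivalent to part (4) of Theorem \ref{mainthm}, whose stabiliser condition is vacuous for regular dessins) combined with the case analysis on $M/M^{2}\in\{1,\,{\mathbb Z}_{2},\,{\mathbb Z}_{2}\times{\mathbb Z}_{2}\}$ in Theorem \ref{teo15} gives Corollary \ref{teo16}, of which the statement is the regular special case. Your ${\mathbb F}_2$-linear bookkeeping in $W=M^{\mathrm{ab}}\otimes{\mathbb F}_2$ and the parity argument when $\dim W=1$ are the same computations, phrased via hyperplanes rather than via homomorphisms onto $\{\pm 1\}$ sending $\sigma_0,\sigma_1\mapsto -1$.
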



\section{Preliminaries and notations}\label{Sec:dessins}

Recall that a \emph{dessin d'enfant} is a connected bipartite graph $\mathcal{D}$ embedded in a closed oriented surface $X$ in such a way that the connected components of $ X \smallsetminus \mathcal{D}$, which are called \emph{faces}, are homeomorphic to open discs. The vertices of a dessin can be coloured white or black in such a way that the two  vertices of any edge do not have the same colour. We say that a face has degree $r$ if it is topologically a polygon of $2r$ sides, while the degree of a vertex is simply the number of edges incident with it. Two dessins d'enfants are called {\it equivalent} if there is an orientation-preserving homeomorphism between the closed orientable surfaces inducing a colour-preserving isomorphism of the corresponding bipartite graphs.

The \emph{passport} of a dessin ${\mathcal D} \subset X$ is the tuple
$(a_{1},\ldots,a_{\alpha};b_{1},\ldots,b_{\beta};c_{1},\ldots,c_{\gamma}),$
where $1\leq a_{1}\leq \cdots \leq a_{\alpha}, \; 1\leq b_{1}\leq \cdots \leq b_{\beta}, \; 1\leq c_{1}\leq \cdots \leq c_{\gamma}$ and 
 ${a_i}$, ${b_j}$ and ${c_k}$ are the degrees of the white vertices, black vertices and faces, respectively.  
 If the least common multiples of the degrees of the white vertices, black vertices and faces are $a, b$ and $c$ respectively,
 then we say that ${\mathcal D}$ has \emph{type} $(a,b,c)$.  
If $\mathcal{D}$ has $n$ edges, then 
$n=a_{1}+\cdots+a_{\alpha}=b_{1}+\cdots+b_{\beta}=c_{1}+\cdots+c_{\gamma},$
and, as a consequence of Euler's formula, the genus of $X$ is
$g=1+\frac{1}{2} \left( n-\alpha-\beta-\gamma\right)$.

After labelling the edges of $\mathcal{D}$ from $1$ to $n$, the orientation of $X$ induces  permutations $\sigma_0, \sigma_1$ encoding the cyclic ordering of the different edges around the white  or black vertices respectively.  The pair $\sigma_0, \sigma_1$ is called the \emph{permutation representation} of  $\mathcal{D}$, and the group $M= \langle \sigma_0, \sigma_1 \rangle$, which is transitive since $\mathcal{D}$ is connected, is called the \emph{monodromy group} of the dessin.

Conversely, any pair of permutations $\sigma_0, \sigma_1$ generating a transitive subgroup of the symmetric group $S_n$ arises as the permutation representation of a dessin d'enfant $\mathcal{D}$ with $n$ edges. The disjoint cycles of $\sigma_0$, $\sigma_1$ and $\sigma_{\infty}:=(\sigma_0 \sigma_1)^{-1}$ are in bijective correspondence with  the white vertices, black vertices and faces of $\mathcal{D}$, and their lengths encode its passport.

A Belyi pair $(S, \beta)$ consists of a compact Riemann surface $S$ (called a Belyi surface) and a non-constant meromorphic function $\beta$ on $S$ (called a Belyi function) with at most three branching values which can be assumed to be $0$, $1$ and $\infty$. Two Belyi pairs $(S_{1},\beta_{1})$ and  $(S_{2},\beta_{2})$ are called {\it equivalent} if there is a bi-holomorphism $\psi:S_{1} \to S_{2}$  such that $\beta_{2} \circ \psi= \beta_{1}$.
There is a well-known bijective correspondence between
the following classes of objects (see e.g. \cite{GiGo}, \cite{JW}): 

\begin{enumerate}
\item  Equivalence classes of dessins.
\item Equivalence classes of Belyi pairs.
\item  Conjugacy classes of finite index subgroups of  triangle groups $\Delta(a,b,c)$.
\item  Conjugacy classes of finite index subgroups of  the free group $F_2$ of rank two.
\item Isomorphism classes of two-generator transitive finite permutation groups.  
\end{enumerate}

The link between these  classes of objects is made as follows.

(1) $\leftrightarrow$ (2) : Given a Belyi pair $(S, \beta)$ one gets a dessin d'enfant by setting
$X = S$, $\mathcal{D} = \beta^{-1} (I)$, where $I$ stands for the unit interval $[0,1]$ in
$\widehat{\mathbb{C}}$, and declaring white (resp.~black) vertices to be the points in the fibre over $0$ (resp.~$1$).  Conversely,  given a dessin $\mathcal{D} \subset X$ one can define a Riemann surface structure on the topological surface $X$ and a Belyi function $\beta$ on it such that $\mathcal{D}=f^{-1}(I)$, with poles corresponding to the centres of faces.

(2) $\leftrightarrow$ (4) :  Given a Belyi pair $(S,\beta)$, we may consider the smooth covering $\beta:S^{*} \to \widehat{\mathbb C}\setminus\{0,1,\infty\}$, where $S^{*}$ is the complement in $S$ of the $\beta$-preimages of $0$, $1$ and $\infty$. By standard covering space theory, these coverings are in bijective correspondence with the finite index subgroups of the fundamental group of $\widehat{\mathbb C}\setminus\{0,1,\infty\}$, which is isomorphic to the free group of rank two.  

(1) $\leftrightarrow$ (3) : The permutation representation of a dessin $\mathcal{D} \subset X$ of type $(a,b,c)$ and $n$ edges induces  a group homomorphism $\omega$, called the \emph{monodromy homomorphism} of the dessin, 
\[\omega: \Delta(a,b,c)= \langle x, y, z \  | \ x^a=y^b=z^c =xyz=1 \rangle \longrightarrow S_n,\] 
determined by $\omega(x) = \sigma_0$, $\omega(y) = \sigma_1$ and $\omega(z)=\sigma_{\infty}$, whose image is the monodromy group $M$. Denoting by $\mathrm{Stab}_M(1)$ the stabiliser subgroup of 1 in $M$ and by  $\Gamma$ its preimage in the triangle group $\Delta=\Delta(a,b,c)$, it can be easily checked that the inclusion $\Gamma \le \Delta$ induces a Belyi map $ \beta:\mathbb{D}/ \Gamma \to \mathbb{D}/ \Delta$
 on $S= \mathbb{D}/ \Gamma$, where $\mathbb D$ is the unit disc; the corresponding dessin is precisely $\mathcal{D}$ and the corresponding monodromy homomorphism 
\[ \omega_{\beta}: \pi_1(\widehat{\mathbb{C}} \smallsetminus \{ 0,1, \infty \}) \longrightarrow S_n\]
 is obtained from $\omega$ by pre-composition with the obvious homomorphism 
\[\pi_1(\widehat{\mathbb{C}} \smallsetminus \{ 0, 1, \infty \}) = \Delta(\infty, \infty, \infty) \longrightarrow \Delta(a,b,c).\]
 
(4) $\leftrightarrow$ (5) :  Given a subgroup of finite index in $F_2$, we obtain a two-generator transitive finite permutation group by letting $F_2$ act on its cosets by right multiplication, and factoring out the kernel of the action. Conversely, any such permutation group is a quotient of $F_2$, acting on the cosets of a subgroup of finite index, which is unique up to conjugacy.
 
It is often useful to reinterpret (5) in the following way, in order to distinguish between the monodromy group of a dessin as an {\em abstract\/} group, and as a {\em permutation\/} group.  We consider $5$-tuples $(M, H, m_{0}, m_{1},m_{\infty})$, where $M$ is a finite group generated by elements $m_{0}$ and $m_{1}$, $m_{0}m_{1}m_{\infty}=1$, and $H$ is a subgroup of $M$ such that its core $\cap_{m \in M} m^{-1}Hm$ in $M$ is the identity subgroup. This $5$-tuple corresponds to the dessin $\mathcal D$ associated with the embedding $\varphi:M \to {\rm Sym}(M/H) \cong S_{n}$ where $n$ is the index of $H$ in $M$, and $M$ acts on cosets of $H$ in the usual way. In this case, $\varphi(M)$ is the monodromy group of $\mathcal D$ and $\varphi(H)$ is the stabiliser of an edge, with $\sigma_{0}=\varphi(m_{0})$, $\sigma_{1}=\varphi(m_{1})$ and $\sigma_{\infty}=\varphi(m_{\infty})$. Conversely, any dessin determines such a $5$-tuple, where $M$ is its monodromy group and $H$ is the subgroup stabilising an edge. In this setting, equivalence of dessins corresponds to isomorphism of $5$-tuples $(M, H, m_{0}, m_{1},m_{\infty})$ and $(M', H', m'_{0}, m'_{1},m'_{\infty})$, meaning an isomorphism $\psi:M \to M'$ such that $\psi(m_{0})=m'_{0}$, $\psi(m_{1})=m'_{1}$, $\psi(m_{\infty})=m'_{\infty}$ and $\psi(H)$ is conjugate to $H'$.

The group $\Gamma$ is torsion-free exactly when $a_i=a$, $b_j=b$ and $c_k=c$ for all $i, j$ and $k$ respectively. Such dessins are called  \emph{uniform dessins}. \emph{Regular dessins} are a subclass of uniform dessins characterized by any of the following equivalent conditions: i) $\mathrm{Stab}_M(1)$ is trivial, ii) $\Gamma$ is normal in $\Delta$, iii) $\beta$ is a Galois (or normal or regular) covering, and iv) the group of orientation-preserving automorphisms of $\mathcal{D}$ as an embedded bicolored graph  acts transitively on the set of edges of the dessin.

Clearly, the roles played by the points $0, 1$ and $\infty$ in the above discussion can be interchanged in order to produce dessins related to each other in a  natural way. For instance, if $\beta$ is a Belyi function with dessin $\mathcal{D}$ then $1-\beta$ is a Belyi function whose dessin is obtained from $\mathcal{D}$ by interchanging the colours of the vertices. More generally, something similar happens by post-composition of $\beta$ with {\em any\/} element of the order 6 group of M\"obius transformations preserving  the set $\{0, 1, \infty\}$. We will say that dessins obtained in this way are \emph{twist-related} to each other. Note that twist-related dessins have the same monodromy group, and that their passports are related by permutations of the branching indices.

For example, let $\mathcal{D}'$ be the dessin obtained from $\mathcal D$ by replacing $\beta$ with the Belyi function $(\beta-1)/\beta$, so that the white and black vertices and face centres of $\mathcal D$ become the face centres and white and black vertices of ${\mathcal D}'$; this operation cyclically permutes their roles by replacing the triple $(\sigma_0, \sigma_1, \sigma_{\infty})$ with the triple $(\sigma_1, \sigma_{\infty}, \sigma_0)$. Repeating (or inverting) this operation gives a dessin $\mathcal{D}''$ corresponding to the Belyi function $1/(1-\beta)$ and the triple $(\sigma_{\infty}, \sigma_0, \sigma_1)$; the white and black vertices and face centres of $\mathcal D$ are the black vertices, face centres and white vertices of ${\mathcal D}''$.

A famous result by Belyi \cite{Belyi} shows that a Riemann surface admits an algebraic model with coefficients in $\overline{\mathbb Q}$ if and only if it admits a Belyi function, which in turn can also be defined over the algebraic numbers. The action of the absolute Galois group ${\rm Gal}(\overline{\mathbb Q}/{\mathbb Q})$ on the coeficients of  Belyi pairs induces an action   on the collection of dessins d'enfants. If ${\mathcal D}$ is a dessin and $\tau \in {\rm Gal}(\overline{\mathbb Q}/{\mathbb Q})$, then ${\mathcal D}^{\tau}$  denotes the image of  ${\mathcal D}$ under the action of $\tau$.  Some basic  invariants of the Galois  action  are the passport, the genus, the number of edges, and the isomorphism classes of the monodromy  and  automorphism groups \cite{GiGo,JS,JW}. 


\section{Z-orientable dessins d'enfants}\label{Sec:orientable}

Following Zapponi \cite{Zapponi0, Zapponi} we shall say that a dessin d'enfant is \emph{Zapponi-orientable} (abbreviated here to \emph{Z-orientable}) if its faces can be labelled with symbols $+$ and $-$ in such a way that each edge meets two faces of different sign. Z-orientability imposes the obvious necessary condition that white and black vertices must have even degree. 
Note also that according to this definition a dessin with \emph{monofacial} edges, meaning  edges meeting the same face on both sides (e.g{\color{red}.~}a dessin with only one face) is never Z-orientable. 

\medskip
\begin{theo} \label{th:squarecharact}
Let $\mathcal{D}$ be a dessin with associated Belyi pair $(S, \beta)$. Then $\mathcal{D}$ is Z-orientable if and only if $\beta(1-\beta)$ is the square of a meromorphic function on $S$.
\end{theo}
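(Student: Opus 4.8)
The plan is to translate the combinatorial statement about face labels into the analytic statement about square roots via the behaviour of $\beta$ on faces and on the rest of the surface.

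First I would analyse the divisor of the function $g=\beta(1-\beta)$ on $S$. Since $\beta$ is branched only over $0,1,\infty$, the zeros of $g$ occur precisely at the white vertices (preimages of $0$) and the black vertices (preimages of $1$), with multiplicities equal to the corresponding vertex degrees, i.e.\ the local branching orders. The poles of $g$ occur at the face centres (preimages of $\infty$), where $\beta$ has a pole of order equal to the face degree, so $g=\beta(1-\beta)$ has a pole of order twice that degree. Thus the polar divisor of $g$ is already even; the question of whether $g$ is a square therefore reduces to whether the divisor $D=\sum_{\text{white }v} (\deg v)\, v + \sum_{\text{black }w} (\deg w)\, w$ (the zero divisor of $g$) is the divisor of a square, equivalently whether $D \equiv 0 \pmod 2$ in $\mathrm{Div}(S)/2\,\mathrm{Div}(S)$ \emph{and} the resulting half-divisor is principal — but in fact the cleanest route is to directly produce or obstruct a square root, as follows.

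The key step is the local-to-global argument on the two pieces of $S$ cut out by the graph. Write $P = \beta^{-1}(\widehat{\mathbb C}\setminus[0,1])$ and $Q=\beta^{-1}(\widehat{\mathbb C}\setminus([-\infty,0]\cup[1,\infty]))$ — roughly speaking, over the complement of the segment $[0,1]$ one can choose a single-valued branch of $\sqrt{g}$ because $\widehat{\mathbb C}\setminus[0,1]$ is simply connected and $g=w(1-w)$ has no zeros there, so $\sqrt{g}$ pulls back to a holomorphic function on each face of $\mathcal D$; similarly over $\widehat{\mathbb C}\setminus([-\infty,0]\cup[1,\infty])$, a simply connected region avoiding the zeros of $g$, one gets a branch of $\sqrt{g}$ whose pullback is holomorphic near every edge (including at the vertices, since there $g$ vanishes to the exact vertex degree and that region is a slit disc around each branch point over which $\sqrt{w}$ or $\sqrt{1-w}$ is single-valued after accounting for the even ramification — here is where the necessary condition that all vertex degrees be even enters). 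The two local branches agree up to sign on each overlap component, i.e.\ on each edge-neighbourhood inside a face, so the obstruction to gluing them into a global meromorphic $\sqrt{g}$ is exactly a sign $\epsilon(F)\in\{\pm1\}$ attached to each face $F$, and consistency across an edge shared by faces $F_1,F_2$ demands $\epsilon(F_1)=-\epsilon(F_2)$ if and only if the two branches disagree there, which one checks happens precisely for edges that are not monofacial and where the local geometry forces a sign flip. Assembling this: a global square root exists iff one can choose the signs $\epsilon(F)$ coherently, and that is exactly a proper $2$-colouring of the faces where adjacent faces get opposite signs, i.e.\ Z-orientability; monofacial edges are the obstruction on both sides (an edge with $F_1=F_2$ demands $\epsilon(F)=-\epsilon(F)$).

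I expect the main obstacle to be making the local analysis at the vertices fully rigorous — namely verifying that, once all vertex degrees are even, the chosen branch of $\sqrt{\beta}$ (resp.\ $\sqrt{1-\beta}$) really does extend holomorphically and nonvanishing-up-to-the-expected-order across each vertex, and carefully tracking how the monodromy of $\sqrt{w(1-w)}$ around $0$ and around $1$ (each a simple branch point of the square root on the base) interacts with the even ramification of $\beta$. The face-side analysis is comparatively easy since $g$ is nonvanishing and the faces are discs. Once the local picture is pinned down, the global statement is a straightforward \v{C}ech-type argument identifying the gluing obstruction with a $2$-face-colouring, and the equivalence with Z-orientability, together with the failure for monofacial edges, falls out immediately. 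It is also worth remarking, as a sanity check, that this argument simultaneously reproves the stated necessary condition (even vertex degrees) and the fact that a one-faced dessin is never Z-orientable.
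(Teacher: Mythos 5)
Your overall strategy (cut $S$ along the dessin, pull back branches of $\sqrt{w(1-w)}$ from two simply connected regions of the base, and read the obstruction to gluing as a sign $\epsilon(F)$ per face) is a legitimate alternative to the paper's argument, which instead works with the $1$-form $\eta=d\beta/\sqrt{\beta(1-\beta)}$ and pins the signs down by residues. But your execution contains a step that fails: the pullback to $S$ of the branch of $\sqrt{w(1-w)}$ chosen on $\widehat{\mathbb C}\setminus\bigl((-\infty,0]\cup[1,\infty)\bigr)$ is \emph{not} holomorphic near a vertex, even when all vertex degrees are even. At a white vertex of degree $d$ one has $\beta=u^{d}$ locally, and the chosen base branch jumps by $-1$ across the slit $(-\infty,0]$ (monodromy around the simple zero of $w(1-w)$ at $w=0$); hence its pullback equals $+u^{d/2}\sqrt{1-u^{d}}$ and $-u^{d/2}\sqrt{1-u^{d}}$ on alternate sectors over the slit plane and is discontinuous across the rays lying over $(-\infty,0)$. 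Already for $\beta=u^{2}$ the pullback is $u\sqrt{1-u^{2}}$ on two opposite sectors and $-u\sqrt{1-u^{2}}$ on the other two. Even degree guarantees that an honest local square root of $\beta(1-\beta)$ exists at the vertex, but it is not the pullback of a single base branch, so the ``edge-side'' local data must be chosen separately at each vertex and the compatibility conditions there worked out; this is exactly the point you yourself flagged as the main obstacle, and it is not done.

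The same omission affects the heart of the proof, which you state rather than prove: ``consistency across an edge \dots which one checks happens precisely for edges \dots where the local geometry forces a sign flip.'' The correct computation is that the comparison of the two face-side branches across \emph{every} edge is forced to be $-1$, because the branch of $\sqrt{w(1-w)}$ on $\widehat{\mathbb C}\setminus[0,1]$ has boundary values on the two sides of $(0,1)$ differing by $-1$ (again the simple zeros at $0$ and $1$); granting this, a global square root exists iff the $\epsilon(F)$ alternate across every edge, i.e.\ iff $\mathcal D$ is Z-orientable, with monofacial edges forcing $\epsilon(F)=-\epsilon(F)$. Since that sign bookkeeping is the entire content of the theorem, asserting it — together with the incorrect mechanism proposed at the vertices — leaves a genuine gap. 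To repair it, either carry out these jump computations at edges and vertices explicitly (your first-paragraph divisor discussion is not needed for this), or follow the paper's route: with $\eta=d\beta/\sqrt{\beta(1-\beta)}$ each edge integral equals $\pm\pi$ since $\int_0^1 dx/\sqrt{x(1-x)}=\pi$, and the residue $\pm im$ of $\eta$ at a face centre of multiplicity $m$ forces all $2m$ edge integrals of that face to have the same sign; this yields the $2$-face-colouring, the exclusion of monofacial edges, and (read backwards, by choosing local branches with prescribed edge integrals) the converse as well.
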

\begin{proof}
We begin  with the observation that $\beta(1-\beta)$ is the square of a meromorphic function if and only if 
the quadratic differential
\[\Phi = \displaystyle\frac{d \beta^{\otimes 2}}{\beta(1-\beta)}\]
is the square of a meromorphic $1$-form. This is because any  meromorphic $1$-form  can be written as $d\beta/h$ for a suitable meromorphic function $h$ (see e.g. \cite{GiGo}, Proposition 1.36).

So, suppose that $\Phi=\eta^{\otimes 2}$ for some meromorphic 1-form $\eta$. Let $F$ be a face 
of $\mathcal{D}$   and $p \in S$ the pole of $\beta$ lying inside  $F$. Suppose that the multiplicity of $p$ is $m$ so that the number of the edges $e_k$ of $F$ is $2m$, where monofacial edges are counted twice. An easy calculation in local coordinates shows that the residue of  $\eta$ at $p$,  
$  \mathrm{res}_p \eta,$ has to be either $ im$ or $-im$. Now we can write this residue $\pm im$ as
\[\displaystyle\frac{1}{2\pi i}\int_{\gamma} \eta=\displaystyle\frac{1}{2\pi i}\int_{\partial F} \eta = 
\displaystyle\frac{1}{2\pi i}  \displaystyle\sum_{k=1}^{2m} \int_{e_k} \eta = \displaystyle\frac{1}{2\pi i}  \displaystyle\sum_{k=1}^{2m} \int_{0}^1 \displaystyle\frac{dx}{\pm \sqrt{x(1-x)}} = \displaystyle\frac{1}{2\pi i}  \displaystyle\sum_{k=1}^{2m} \pi \varepsilon_k,
\]
where 
\begin{itemize}
\item $\varepsilon_k = \pm 1$,
\item $\gamma\subset F$ stands for any Jordan curve encircling  $p$ traversed in the same direction as $\partial F$ (the oriented boundary of $F$), 
\item  the edges $e_k$ of $F$ are assumed to be endowed with   the orientation induced by  $\partial F$ so in particular
each monofacial edge occurs twice, once for each  orientation (so that its contributions to the above sum cancel out),
\item the first equality holds because 
$\partial F$ and $\gamma$ are homology equivalent loops in the punctured Riemann surface $S\setminus \left\{ \text{poles of }
 \eta   \right\}$, and
\item the last equality holds because  $ \int \frac{dx}{ \sqrt{x(1-x)}} =  \arcsin (2x-1).$ 
	\end{itemize}
	We notice that  the identity \
$\pm im   =  \frac{1}{2\pi i}   \sum_{k=1}^{2m} \pi \varepsilon_k$ \
implies that the integrals $\int_{e_k} \eta= 
\pm \int_0^1 \frac{dx}{ \sqrt{x(1-x)}}$ 
 must all take the same value, namely  $\pi$ when  $\mbox{res}_p \eta = -im$ and $-\pi$  when $\mbox{res}_p \eta = im.$ In particular monofacial edges do not  occur when $\beta(1-\beta)$ is a square.
\newline
We can now 
Z-orient $\mathcal{D}$ 
by declaring  $F$  positive or negative according to whether  $\mbox{res}_p \eta = -im$ or $im$. 
By the previous observation this is equivalent to saying that the sign of $F$ is positive (resp. negative)
  when the integral of $\eta$ along any of its  (oriented) edges  equals $\pi$ (resp. $-\pi$). Of course when an edge is oriented according to the adjacent face the integral takes the opposite value, hence our rule assigns  opposite signs to adjacent faces, as it should.

Conversely, let us now assume that $\mathcal{D}$ is Z-oriented. We want to show that a   choice of $\sqrt{\beta(1-\beta)}$ can be made around each point so that the expression  
  \begin{equation} \label{1-form}
 \displaystyle\frac{d \beta}{\sqrt{\beta(1-\beta)}}
\end{equation} 
yields a globally defined  meromorphic $1$-form. \newline
As noted at the beginning of this section the zeroes as well as the preimages of $1$ under $\beta$ all have even degree. This implies that 
 the zeroes and poles of
$\beta(1-\beta)$ are also of even degree and therefore there is a well-defined
 square root of this function (hence, exactly two)
 on any simply connected  domain $U$. 

\medskip 
 \noindent
1) (\emph{Choice of $\sqrt{\beta(1-\beta)}$ on the interior of a face}) Let $F$ be a face 
of $\mathcal{D}$, $p \in S$ the pole of $\beta$ in $F$ and $m$ its multiplicity.  
Let  $U$ be the   interior of $F$.
Since $U$ is an open cell   there are two 
  well-defined holomorphic 
	square roots of
$\beta(1-\beta)$ on it.  
If $F$ has  positive (resp. negative) sign  we choose the one for which the residue of
the   1-form
on $U$  defined by the expression (\ref{1-form}) is
		$-im$ (resp. $im$). We shall denote this 1-form by $\eta$.
		
\medskip
\noindent 
2) (\emph{Choice of  $\sqrt{\beta(1-\beta)}$ around the edges})  We now need to make a choice of $\sqrt{\beta(1-\beta)}$  on a small simply connected  neighbourhood  
 $U_k$ of each of the edges $e_k$ of a given face $F$. This is equivalent to making a choice on  each $U_k$ 
of the  1-form  defined by (\ref{1-form}). If $F$ has positive (resp. negative) sign we choose the 1-form   $\eta_k$ that satisfies
$  \int_{e_k} \eta_k   =    \pi $ (resp.  $-\pi $)
with respect to  the orientation  on $e_k$ induced by the orientation of $\partial F$.
Notice that this choice is independent of which of the two faces incident to $e_k$ we look at because if $e_k$ is oriented according to the   other face $G$   
the integral
$  \int_{e_k} \eta_k  $ will take the opposite value. Since the sign  of $G$ is the opposite to the sign  of $F$ we see that 
   $\eta_k$ is also our choice from the perspective of the adjacent face $G.$ We recall that, as was observed in the introduction, being Z-orientable implies the absence of monofacial  edges.
  \newline
It remains to see that $\eta=\eta_k$ on each intersection $U \cap U_k$. 
 Suppose, for instance, that
  $F$ is positively oriented. 	In this case the choice of 
  $\eta$ we have  made is   the one fulfilling the identity 
\[ -im=\mathrm{res}_p \eta =\displaystyle\frac{1}{2\pi i}\int_{\gamma} \eta
\]
where $\gamma$ is any 
Jordan curve  around the pole $p$ lying  inside the interior $U\subset F$. Since $\mathcal{D}$ does not  have monofacial edges, the number of edges of $F$ is exactly $2m$. Now let
$\gamma$ 
approach the boundary $\partial F$ by means of  Jordan curves consisting of $2m$ segments
$\gamma_k$, each of which approaches the  corresponding edge $e_k$.
Then, for each $k$ the  integral $\int_{\gamma_k} \eta$ will get as close  to one of the values 
$\pm \int \frac{dx}{ \sqrt{x(1-x)}} =  \pm \pi$ 
as we wish, the sign 
depending on whether  $\eta=\eta_k$ or $\eta=-\eta_k$ on $U \cap U_k$. 
But then 
 one sees  that in order for
the identity
\[-im =\displaystyle\frac{1}{2\pi i}\int_{\gamma} \eta= 
\displaystyle\frac{1}{2\pi i}  \displaystyle\sum_{k=1}^{2m} \int_{\gamma_k} \eta 
\]
to hold each integral $\int_{\gamma_k} \eta $ must be close to $\pi$ and so  $ \eta $ must agree with $\eta_k$ on $U \cap U_k$. 
\end{proof} 

\medskip

Since being a square is preserved under the Galois action we deduce  the following:

\medskip
\begin{coro} \label{coro:orient}
Z-orientability is a Galois invariant.
\end{coro}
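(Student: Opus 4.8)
The plan is to obtain the corollary immediately from Theorem~\ref{th:squarecharact}, once we observe that the square root occurring there may be taken to be defined over $\overline{\mathbb Q}$. Recall that, by Belyi's theorem, the Belyi pair $(S,\beta)$ of $\mathcal D$ may be chosen with $S$ a smooth projective curve over $\overline{\mathbb Q}$ and $\beta\in\overline{\mathbb Q}(S)$, and that for $\tau\in\mathrm{Gal}(\overline{\mathbb Q}/\mathbb Q)$ the dessin $\mathcal D^{\tau}$ is the one whose Belyi pair is $(S^{\tau},\beta^{\tau})$, obtained by letting $\tau$ act on coefficients.

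The first step is to upgrade Theorem~\ref{th:squarecharact} to a statement over $\overline{\mathbb Q}$: \emph{$\mathcal D$ is Z-orientable if and only if $\beta(1-\beta)$ is the square of a function in $\overline{\mathbb Q}(S)$.} For this, suppose the theorem furnishes $h\in\mathbb C(S)$ with $h^{2}=\beta(1-\beta)$. Then $h$ is algebraic over $\overline{\mathbb Q}(\beta)\subseteq\overline{\mathbb Q}(S)$, being a root of $T^{2}-\beta(1-\beta)\in\overline{\mathbb Q}(\beta)[T]$; since $\overline{\mathbb Q}(S)$ is algebraically closed in $\mathbb C(S)$ (equivalently, $S$ remains geometrically connected over $\mathbb C$), it follows that $h\in\overline{\mathbb Q}(S)$.

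The second step is to apply $\tau$. It induces a field isomorphism $\overline{\mathbb Q}(S)\to\overline{\mathbb Q}(S^{\tau})$, $f\mapsto f^{\tau}$, compatible with its action on $\overline{\mathbb Q}$ and fixing $1$, so $\beta^{\tau}(1-\beta^{\tau})=\bigl(\beta(1-\beta)\bigr)^{\tau}=(h^{\tau})^{2}$ with $h^{\tau}\in\overline{\mathbb Q}(S^{\tau})$. Theorem~\ref{th:squarecharact}, applied to $(S^{\tau},\beta^{\tau})$, then shows that $\mathcal D^{\tau}$ is Z-orientable whenever $\mathcal D$ is; running the same argument with $\tau^{-1}$ in place of $\tau$ and $\mathcal D^{\tau}$ in place of $\mathcal D$ gives the reverse implication. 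Hence $\mathcal D$ is Z-orientable if and only if $\mathcal D^{\tau}$ is, as claimed.

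The only point requiring genuine (if standard) care is the descent in the first step, from ``square in $\mathbb C(S)$'' to ``square in $\overline{\mathbb Q}(S)$''; this is exactly what underlies the informal phrase ``being a square is preserved under the Galois action''. One could also package it divisor-theoretically, noting that Z-orientability amounts to the triviality of the class of $\tfrac12\,\mathrm{div}(\beta(1-\beta))$ in $\mathrm{Pic}(S)$, a condition insensitive to enlarging the field of constants and equivariant under the Galois action; with Theorem~\ref{th:squarecharact} in hand, everything else is purely formal.
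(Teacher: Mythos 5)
Your proposal is correct and follows exactly the paper's route: the corollary is deduced directly from Theorem~\ref{th:squarecharact} together with the fact that being a square is preserved under the Galois action, which is all the paper itself says. The only difference is that you spell out the (standard) descent step showing the square root $h$ may be taken in $\overline{\mathbb Q}(S)$ before applying $\tau$, a detail the paper leaves implicit.
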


\medskip

It is well known that the action of the absolute Galois group on Belyi surfaces of each genus $g \geq 1$ is faithful, that is, for each non-identity $\tau \in {\rm Gal}(\overline{\mathbb Q}/{\mathbb Q})$ there is a Belyi surface  $S$ (in each genus $g \geq 1$, see~\cite{GiGo1}) so that $S^{\tau}$ is not isomorphic to $S$.  In fact, the Galois action is faithful even when restricted to \emph{quasiplatonic} Belyi surfaces (those admitting a regular Belyi function), see \cite{GoJa}.

\medskip
\begin{prop}
Every Belyi surface admits a Z-orientable dessin. In particular, the absolute Galois group acts faithfully on the class of Z-orientable dessins.
\end{prop}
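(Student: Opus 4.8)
The plan is to reduce everything to Theorem~\ref{th:squarecharact}: it suffices to show that every Belyi surface $S$ carries a Belyi function $\gamma$ for which $\gamma(1-\gamma)$ is the square of a meromorphic function on $S$. I would produce such a $\gamma$ by a pull-back trick. Pick any Belyi function $\beta$ on $S$ and compose it with a suitable \emph{fixed} rational map $f:\widehat{\mathbb C}\to\widehat{\mathbb C}$. If $f$ is itself a Belyi function and moreover $f(\{0,1,\infty\})\subseteq\{0,1,\infty\}$, then $\gamma=f\circ\beta$ is again a Belyi function on $S$, since the branch values of $\gamma$ lie in $(\text{critical values of }f)\cup f(\{0,1,\infty\})\subseteq\{0,1,\infty\}$. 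Hence the whole task is to exhibit one such $f$ with the additional feature that $f(1-f)$ is a perfect square of a rational function of $t$; then $\gamma(1-\gamma)=\bigl(f(1-f)\bigr)\circ\beta$ is automatically the square of a meromorphic function on $S$.

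The candidate I would use is the degree-$2$ Belyi map underlying the unique dessin of degree $2$ and type $(2,2,1)$ that already appears in part~(5) of Theorem~\ref{mainthm}, namely
\[
f(t)\;=\;1-\frac{1}{4t(1-t)}\;=\;-\frac{(1-2t)^2}{4t(1-t)}.
\]
A direct computation shows that $f$ is branched only over $0$ and $1$ (with critical points $t=1/2$ over $0$ and $t=\infty$ over $1$), that $f(0)=f(1)=\infty$ and $f(\infty)=1$, so $f$ is Belyi and satisfies $f(\{0,1,\infty\})\subseteq\{0,1,\infty\}$; and that $1-f=\tfrac{1}{4t(1-t)}$, whence
\[
f(1-f)\;=\;-\frac{(1-2t)^2}{16\,t^2(1-t)^2}\;=\;\Bigl(\frac{i(1-2t)}{4t(1-t)}\Bigr)^{2},
\]
a square of a rational function (using $-1=i^2$). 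Consequently $\gamma=f\circ\beta=-\tfrac{(1-2\beta)^2}{4\beta(1-\beta)}$ satisfies $\gamma(1-\gamma)=\bigl(i(1-2\beta)/(4\beta(1-\beta))\bigr)^{2}$, and Theorem~\ref{th:squarecharact} then says that the dessin $\mathcal D$ of $\gamma$ is Z-orientable. (In hindsight, once Theorem~\ref{mainthm} is available this is just the assertion that $\mathcal D$ covers the degree-$2$ type-$(2,2,1)$ dessin, part~(5); the map $f(t)=\frac{(t^2+1)^2}{4t^2}$ would serve equally well.)

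For the faithfulness statement I would combine this construction with the known faithfulness of the Galois action on Belyi surfaces recalled just before the proposition. Given a non-identity $\tau\in\mathrm{Gal}(\overline{\mathbb Q}/\mathbb Q)$, choose a Belyi surface $S$ with $S^{\tau}\not\cong S$, and let $\mathcal D$ be the Z-orientable dessin on $S$ constructed above. The surface underlying $\mathcal D^{\tau}$ is $S^{\tau}$, so $\mathcal D^{\tau}\not\cong\mathcal D$; moreover $\mathcal D^{\tau}$ is again Z-orientable by Corollary~\ref{coro:orient}. Thus $\tau$ acts non-trivially on the set of isomorphism classes of Z-orientable dessins, which is the claimed faithfulness.

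I expect the only genuine content to be the construction of the map $f$ — verifying simultaneously that it is Belyi, that it maps $\{0,1,\infty\}$ into itself, and that $f(1-f)$ is a square; the remainder is bookkeeping on top of Theorem~\ref{th:squarecharact} and the quoted faithfulness result. The subtle point to watch is exactly that $f\circ\beta$ must not acquire branch points outside $\{0,1,\infty\}$, which is why one needs $f(\{0,1,\infty\})\subseteq\{0,1,\infty\}$ in addition to $f$ being Belyi.
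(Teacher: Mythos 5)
Your proposal is correct and follows essentially the same strategy as the paper: reduce to Theorem~\ref{th:squarecharact} by postcomposing an arbitrary Belyi function $\beta$ on $S$ with a fixed auxiliary Belyi map $f$ satisfying $f(\{0,1,\infty\})\subseteq\{0,1,\infty\}$ and with $f(1-f)$ a square, so that $\gamma=f\circ\beta$ is Belyi and $\gamma(1-\gamma)$ is a square; the faithfulness argument via Corollary~\ref{coro:orient} and the quoted faithfulness on Belyi surfaces is also the paper's (implicit) argument. The only difference is the choice of auxiliary map: you use the degree-$2$ function $f(t)=-(1-2t)^2/4t(1-t)$ attached to the type-$(2,2,1)$ dessin, whereas the paper uses the degree-$6$ map $t(x)=27x^{2}(x-1)^{2}/4(x^{2}-x+1)^{3}$ related to the $j$-function; your computations (critical values $\{0,1\}$, $f(0)=f(1)=\infty$, $f(\infty)=1$, and $f(1-f)=\bigl(i(1-2t)/4t(1-t)\bigr)^{2}$) check out, so this simpler choice works equally well.
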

\begin{proof}
 Let $\beta$ be a Belyi function on $S$, and consider the multiplicative inverse of the Klein $j$-modular function $t(x)=27x^{2}(x-1)^{2}/4(x^{2}-x+1)^3$. Both $t$ and 
 $t \circ \beta$ are Belyi functions \cite{GiGo} and one has the identity 
$$t(\beta(z))(1-t(\beta(z))=
\left(\frac{3\sqrt{3} (\beta(z)-2) (\beta(z)-1) \beta(z) (\beta(z)+1) (2 \beta(z)-1)}{4 \left(\beta(z)^2-\beta(z)+1\right)^3}\right)^{2}.$$
So, by Theorem \ref{th:squarecharact}, the Belyi function $t \circ \beta$ on $S$ defines a Z-orientable dessin. 
\end{proof}

\medskip

We next present a useful characterization of Z-orientability of dessins in terms of their permutation representation. This is a criterion established by Zapponi in \cite{Zapponi} for the so-called clean dessins, 
but one can easily see that it works for general dessins.

\medskip
\begin{prop}[Zapponi's criterion] \label{prop_zapponi}
Let $\mathcal{D}$ be a dessin with permutation representation $(\sigma_0, \sigma_1)\in S_n \times S_n$. Then $\mathcal{D}$ is Z-orientable if and only if the rule
$$
\sigma_0, \sigma_1 \longmapsto -1
$$
defines an epimorphism
$$
\rho: M=\langle \sigma_0, \sigma_1 \rangle \longrightarrow \{ \pm 1 \}
$$
such that the stabiliser subgroup $\mathrm{Stab}_M(1)$ is contained in $\ker(\rho)$.
\end{prop}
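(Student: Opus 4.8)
The plan is to prove the equivalence directly from the permutation model, using only the combinatorial descriptions of the faces and of the edge--face incidences. Recall from Section~\ref{Sec:dessins} that the faces of $\mathcal D$ correspond to the cycles of $\sigma_{\infty}=(\sigma_0\sigma_1)^{-1}$ on the edge set $\{1,\dots,n\}$. In addition I need the standard incidence rule, read off from the local picture of the embedding: an edge $k$ lies on precisely the two faces given by the $\sigma_{\infty}$-cycles through $k$ and through $\sigma_1^{-1}(k)$, and $k$ is monofacial exactly when these coincide. There is no genuine asymmetry between $\sigma_0$ and $\sigma_1$ here, because from $\sigma_{\infty}^{-1}=\sigma_0\sigma_1$ one gets $\sigma_0(k)=\sigma_{\infty}^{-1}\bigl(\sigma_1^{-1}(k)\bigr)$, so $\sigma_0(k)$ and $\sigma_1^{-1}(k)$ always lie in the same $\sigma_{\infty}$-cycle, and the ``second'' face through $k$ is equally the cycle through $\sigma_0(k)$.

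For ``$\Leftarrow$'' I would suppose that such a $\rho$ exists and that $H:=\mathrm{Stab}_M(1)\le\ker\rho$. Identify edge $k$ with the left coset $gH$ where $g(1)=k$, so that $M$ acts on edges by left multiplication on $M/H$; since $H\le\ker\rho$, the value $\varepsilon(k):=\rho(g)\in\{\pm1\}$ is well defined, and $\varepsilon(g'\cdot k)=\rho(g')\varepsilon(k)$ for all $g'\in M$. Because $\rho(\sigma_{\infty})=(\rho(\sigma_0)\rho(\sigma_1))^{-1}=1$, the map $\varepsilon$ is constant along each $\sigma_{\infty}$-cycle and so descends to a labelling $c$ of the faces. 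For every edge $k$ we have $\varepsilon(\sigma_1^{-1}(k))=\rho(\sigma_1)^{-1}\varepsilon(k)=-\varepsilon(k)$, so the cycles through $k$ and $\sigma_1^{-1}(k)$ are distinct (in particular $\mathcal D$ has no monofacial edge) and $c$ assigns them opposite signs; hence $c$ is a Zapponi-orientation of $\mathcal D$.

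For ``$\Rightarrow$'' I would begin with a labelling $c(F)\in\{\pm1\}$ of the faces such that the two faces incident with any given edge receive opposite signs (so $\mathcal D$ has no monofacial edge), and set $\varepsilon(k):=c(F_k)$, where $F_k$ denotes the face whose $\sigma_{\infty}$-cycle contains $k$. For each edge $k$ the faces through $k$ and through $\sigma_1^{-1}(k)$ are distinct and adjacent, so $\varepsilon(\sigma_1^{-1}(k))=-\varepsilon(k)$, equivalently $\varepsilon(\sigma_1(k))=-\varepsilon(k)$; and since $\sigma_0(k)$ lies in the $\sigma_{\infty}$-cycle of $\sigma_1^{-1}(k)$, likewise $\varepsilon(\sigma_0(k))=-\varepsilon(k)$. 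Thus each of $\sigma_0^{\pm1},\sigma_1^{\pm1}$ reverses $\varepsilon$, so any word in these letters that represents $1\in M$ has even length (it fixes every edge); this is exactly the condition that $\sigma_0,\sigma_1\mapsto-1$ extends to a homomorphism $\rho:M\to\{\pm1\}$, which is onto since $\rho(\sigma_0)=-1$, and one then has $\varepsilon(g\cdot k)=\rho(g)\varepsilon(k)$ for all $g\in M$. Finally, for $h\in H$ this gives $\varepsilon(1)=\varepsilon(h\cdot1)=\rho(h)\varepsilon(1)$, so $\rho(h)=1$ and $H\le\ker\rho$, as required.

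The point that requires care is precisely the edge--face incidence rule, together with the identity $\sigma_0(k)=\sigma_{\infty}^{-1}\bigl(\sigma_1^{-1}(k)\bigr)$ that makes \emph{both} generators reverse the sign function $\varepsilon$; this is also the only place where Zapponi's argument for clean dessins --- where $\sigma_1$ is a fixed-point-free involution, so the role of $\sigma_1$ is degenerate --- genuinely has to be adapted. Everything else is routine coset algebra, and as a consistency check the evenness of all vertex degrees drops out automatically, an $a$-cycle of $\sigma_0$ through $k$ forcing $\varepsilon(k)=(-1)^a\varepsilon(k)$.
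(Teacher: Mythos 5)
Your argument is correct, but it proves the criterion by a genuinely different route from the one the paper uses. The paper's proof of Proposition~\ref{prop_zapponi} is function-theoretic: it invokes Theorem~\ref{th:squarecharact} (Z-orientability $\Leftrightarrow$ $\beta(1-\beta)$ is a square), realises the relevant degree-two cover via the map $z\mapsto-\tfrac14(z-1/z)^2$ and its intermediate quotients $\pi_1,\pi_2$, and uses the universal property of the fiber product to factor $\beta=\pi_1\circ P$, translating this through the triangle-group picture $\Gamma\le K\unlhd\Delta$ into the existence of $\rho$ with $\mathrm{Stab}_M(1)\le\ker\rho$ (and conversely). You instead work entirely in the permutation model, directly from the definition of Z-orientability as a $2$-colouring of faces; the one nontrivial input is the edge--face incidence rule (the two faces through edge $k$ are the $\sigma_\infty$-cycles through $k$ and through $\sigma_1^{-1}(k)$, with monofaciality when they coincide), and your identity $\sigma_0(k)=\sigma_\infty^{-1}\bigl(\sigma_1^{-1}(k)\bigr)$ is exactly what makes both generators sign-reversing; with that in hand the two implications are clean coset/word-length algebra, and I checked them: well-definedness of $\varepsilon$ from $H\le\ker\rho$, constancy of $\varepsilon$ on faces from $\rho(\sigma_\infty)=1$, and the even-length argument giving the homomorphism in the converse are all sound. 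Your route is in fact close in spirit to the alternative combinatorial viewpoint the paper only sketches in Section~\ref{alternative} (the equivalences (1)$\Leftrightarrow$(3)$\Leftrightarrow$(5) of Proposition~\ref{equivalent}, of which Proposition~\ref{prop_zapponi} is noted to be a consequence), but you carry it out in full detail, which the paper does not. The trade-off: your proof is elementary and self-contained, needing no Riemann-surface or fiber-product machinery, at the cost of pinning down an orientation-convention-dependent incidence rule (which is correct under a consistent convention, and the argument is robust since under the mirror convention the second face is still the cycle through the image of $k$ under a generator mapped to $-1$); the paper's proof is heavier but yields extra structure along the way --- the factorization $\beta=\pi_1\circ P$ and the fiber-product interpretation exploited in the subsequent remarks, and a direct link to the square criterion underlying Galois invariance.
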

\begin{proof}
Let $\mathcal{D}$ be a Z-orientable dessin and $(S, \beta)$ its corresponding Belyi pair, so that $\beta(1- \beta)= h^2$ for some meromorphic function $h:S\to\widehat{\mathbb C}$. 

Consider the rational function  
$$
z  \longmapsto  - \displaystyle\frac{1}{4}\left( z - \displaystyle\frac{1}{z} \right)^2,
$$
which is a  branched Galois cover $\widehat{\mathbb{C}} \to \widehat{\mathbb{C}}$ whose deck transformation group is $\langle A_1 , A_2 \rangle \cong  {\mathbb Z}_{2}\times   {\mathbb Z}_{2} $, where $A_1(z)= 1/z$ and $A_2(z)=-1/z$. 

The  intermediate degree 2 branched Galois covers with 
deck group $\langle A_1 \rangle $ and $\langle A_2 \rangle $ are given by $\pi_{1}(z)=(z+1)^{2}/4z$ and $\pi_{2}(z)=(z^{2}-1)/4iz$ (we may observe that $\pi_{1}(z)=\pi_{2}(iz)+1/2$).
Therefore, setting 
$$
Q_1(z)=4z(1-z), \ Q_2(z)=4z^2,  \ \pi_1(z)=\displaystyle\frac{(z+1)^2}{4z}, \ \mbox{and } \pi_2(z)=\displaystyle\frac{(z^2-1)}{4iz}
$$
one has the following identities:
$$
\begin{array}{rcl}
Q_1 \circ \beta & = & Q_2 \circ h \\
Q_1 \circ \pi_1 & = & Q_2 \circ \pi_2 .
\end{array}
$$

From these two identities it is easy to see that the fiber product of the pairs $(\widehat{\mathbb{C}}, Q_1)$ and $(\widehat{\mathbb{C}}, Q_2)$ is the triple $(\widehat{\mathbb{C}}, \pi_1, \pi_2)$. 

The fiber product enjoys the following universal property, written for our situation (see Example 2.25 in \cite[page 30]{Ha}): given two morphisms $\phi_{1},\phi_{2}:S \to  \widehat{\mathbb C}$ such that  $Q_{1} \circ \phi_{1}=Q_{2} \circ \phi_{2}$, there exists a morphism $\psi:S \to \widehat{\mathbb C}$ such that $\phi_{j}=\pi_{j} \circ \psi$. So, by taking $\phi_{1}=\beta$ and $\phi_{2}=h$, this asserts 
 the existence of a morphism $P: S \to \widehat{\mathbb{C}}$ making the following diagram commutative

\centerline{
\xymatrix{ & 
S \ar@/_1pc/[ldd]_{\beta} \ar[d]^P \ar@/^1pc/[rdd]^h &  \\ & 
\widehat{\mathbb{C}} \ar[dl]^{\pi_1} \ar[dr]_{\pi_2}  & \\  \widehat{\mathbb{C}} \ar@/_/[dr]_{Q_1} & & \widehat{\mathbb{C}}  \ar@/^/[dl]^{Q_2} \\ &   \widehat{\mathbb{C}} & }
}

\

In particular we obtain a factorization of $\beta$ as $\beta = \pi_1 \circ P$. Now if
$$
\omega : \Delta(a,b,c) \longrightarrow M \le S_n
$$
is the monodromy homomorphism of the dessin, so that the inclusion of $\Gamma = \omega^{-1}(\mathrm{Stab}_M(1))$ in $\Delta=\Delta(a,b,c)$ induces the Belyi function $\beta$,  then the above factorization of $\beta$ corresponds to an intermediate subgroup $\Gamma < K \unlhd \Delta$, where $K$ is the kernel of the epimorphism $\theta: \Delta \longrightarrow \{ \pm 1 \}$ defined by the rule $x,y \longmapsto -1$, so that the inclusion $K \unlhd \Delta$ induces the function $\pi_1$.

It follows that the rule $\sigma_0, \sigma_1 \longmapsto -1$ indeed defines  a homomorphism $\rho: M \longrightarrow \{ \pm 1 \}$ and that, in fact, $\theta= \rho \circ \omega$. In turn this last identity implies that $\mathrm{Stab}_M(1) = \omega(\Gamma) < \omega(K)= \ker(\rho)$, as required.

 Conversely, let us assume the existence of a homomorphism 
 $$
 \rho:M  \longrightarrow  \{ \pm1\}: \sigma_0, \sigma_1  \longmapsto   -1
 $$
 such that ${\rm Stab}_{M}(1) \le \ker(\rho)$. Then there exists a commutative diagram 
   
   \centerline{
\xymatrix{  
\Delta \ar[r]^{\omega} \ar[rd]_{\theta}  &  M \ar[d]^{\rho} \\   & \{  \pm 1 \} }
}

\

\noindent 
where $\omega $ is the monodromy map and $\theta$ is defined by $\theta(x)=\theta(y)=-1$. It follows that
$$
\Gamma = \omega^{-1}(\mathrm{stab}_M(1)) \subset  \omega^{-1}(\ker \rho) = \ker \rho \circ \omega = \ker \theta = K
$$
and we have a  commutative diagram of morphisms of Riemann surfaces
  
  \centerline{
\xymatrix{ S = \mathbb{D}/ \Gamma  
 \ar[r]^{P} \ar[rd]_{\beta}  &  \mathbb{D}/ K  \ar[d]^{\pi} \\   & \mathbb{D}/ \Delta  }
}

\

The vertical arrow is a degree two regular covering of the Riemann sphere which ramifies over only two points, namely the quotient by the group $G= \Delta/K $. After suitable identifications $\mathbb{D}/ \Delta \simeq \widehat{\mathbb{C}}$ and $\mathbb{D}/ K \simeq \widehat{\mathbb{C}}$, the projection $\beta$ becomes the Belyi function associated with $\mathcal{D}$, the group $G$ corresponds to the group generated by the involution  $J(x)=1/x$, and the function $\pi$ corresponds to 
$\pi_{1}(z)= (1+z)^2/4z$. It follows that $\beta = \pi_{1} \circ P = (1+P)^2/4P$ and a direct computation gives 
$$\beta(1-\beta)= \displaystyle\left( \displaystyle\frac{i(P^2-1)}{4P}  \right)^{2}.$$
\end{proof}

\medskip
\begin{rema}Concerning the use of fiber products in the above proof the following facts should be mentioned.
\begin{enumerate}
\item  The Z-orientability of the dessin induced by the Belyi pair $(S,\beta)$ is equivalent to the reducibility of the fiber product 
$$
S \times_{(\beta, \pi_1)} \widehat{\mathbb{C}} = \left\{ (z,w) \in S \times \widehat{\mathbb{C}}: 
\ \beta(z)=\pi_1(w)
 \right\}.
$$
In the case when it is irreducible the pair $(S \times_{(\beta, \pi_1)} \widehat{\mathbb{C}}, Q_1 \circ \beta)$ is a Belyi pair corresponding to a  Z-orientable dessin covering the original non-Z-orientable dessin with degree two.

\item The dessin induced by the Belyi pair $(S,\beta)$ is Z-orientable if and only if
there exists a non-constant meromorphic function $P:S \to \widehat{\mathbb{C}}$, whose branch values are contained in the set $\{\infty,0,-1,1\}$, such that $\beta=\pi_{1} \circ P$. In this situation the Belyi pair $(S,\delta)$, where $\delta=P^2/(P^2 -1)$, defines the dessin induced by the dual bipartite map.

\end{enumerate}
\end{rema}

\medskip
\begin{rema}
Note that in the case of regular dessins we have $\mathrm{Stab}_M(1)=\{1\}$, and therefore the last condition on  $\rho$ in Proposition \ref{prop_zapponi} is trivially satisfied. 
\end{rema}

\medskip

Proposition \ref{prop_zapponi} has an obvious immediate consequence:

\medskip
\begin{coro} \label{coro:index2}
Let $\mathcal{D}$ be a dessin with monodromy group $M$. If $M$ does not contain an index two subgroup, then $\mathcal{D}$ is not Z-orientable. In particular, a dessin whose monodromy group is a simple group of order greater  than two is never Z-orientable.
\end{coro}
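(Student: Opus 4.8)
The plan is to read Corollary~\ref{coro:index2} directly off Zapponi's criterion (Proposition~\ref{prop_zapponi}). That proposition shows that a \emph{necessary} condition for $\mathcal{D}$ to be Z-orientable is the existence of an epimorphism $\rho\colon M\to\{\pm1\}$ sending both $\sigma_0$ and $\sigma_1$ to $-1$; the extra requirement $\mathrm{Stab}_M(1)\le\ker\rho$ is irrelevant for the direction we need. So first I would observe that the kernel of any such $\rho$ is a subgroup of $M$ of index exactly $2$. Consequently, if $M$ has no subgroup of index $2$, then no epimorphism $M\to\{\pm1\}$ can exist at all, the criterion of Proposition~\ref{prop_zapponi} fails, and hence $\mathcal{D}$ is not Z-orientable. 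This settles the first assertion.

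For the ``in particular'' clause I would recall that a subgroup of index $2$ is automatically normal (it is a union of cosets, being the complement of a single nontrivial coset). Now suppose $M$ is simple with $|M|>2$. If $M$ had a subgroup $N$ of index $2$, then $N$ would be normal with $N\neq M$ (since $[M:N]=2$) and $N\neq\{1\}$ (since $|N|=|M|/2\ge 2$), contradicting simplicity. Hence $M$ has no index-$2$ subgroup, and the first part applies.

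Honestly, there is essentially no obstacle here: the statement is an immediate repackaging of Proposition~\ref{prop_zapponi}, and the write-up will occupy only a few lines. The one point that deserves a word of care is the hypothesis $|M|>2$ in the simple-group case, which cannot be dropped: the group $\mathbb{Z}_2$ is simple yet has the trivial subgroup as an index-$2$ subgroup, and the degree-$2$ dessin of type $(2,2,1)$, whose monodromy group is $\mathbb{Z}_2$, is Z-orientable (indeed it is the minimal Z-orientable dessin appearing in part~(5) of Theorem~\ref{mainthm}). I would simply flag this exceptional example to motivate the hypothesis.
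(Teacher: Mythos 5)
Your proof is correct and follows exactly the route the paper intends: the paper states the corollary as an "obvious immediate consequence" of Proposition~\ref{prop_zapponi}, namely that Z-orientability forces an epimorphism $M\to\{\pm1\}$, whose kernel is an index-two subgroup, and a simple group of order greater than two has no such (necessarily normal, proper, nontrivial) subgroup. Your side remark about $\mathbb{Z}_2$ and the degree-two dessin of type $(2,2,1)$ correctly explains why the order hypothesis is needed, but adds nothing beyond the paper's argument.
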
 

\medskip 

As the monodromy group $M$ of a dessin is a two-generator group, the group $M/M^2$, where $M^2$ is the subgroup generated by the squares of its elements, is a two-generator group of exponent dividing $2$ and therefore isomorphic to  $1, \mathbb{Z}_2$ or $ \mathbb{Z}_2 \times \mathbb{Z}_2$. (All three cases arise, for instance when $M$ has odd order, or when $M$ is a dihedral group of order $2n$ for $n$ odd or even, respectively.) This fact will be used in Section \ref{tot}.

\medskip
\begin{coro} \label{le:msq}
Let ${\mathcal D}$ be a dessin with monodromy group $M$. If $M^{2}=M$, then ${\mathcal D}$ is not Z-orientable.
\end{coro}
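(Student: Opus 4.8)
The plan is to deduce this directly from Zapponi's criterion (Proposition~\ref{prop_zapponi}), essentially as a restatement of Corollary~\ref{coro:index2}. By that criterion, if $\mathcal{D}$ were Z-orientable then there would exist an \emph{epi}morphism $\rho\colon M\to\{\pm1\}$ sending $\sigma_0,\sigma_1$ to $-1$. The key observation is that any homomorphism from $M$ to the group $\{\pm1\}$, which has exponent $2$, must kill every square in $M$, hence must factor through the quotient $M/M^2$. Therefore the existence of a \emph{surjective} homomorphism $M\to\{\pm1\}$ forces $M/M^2\neq1$, i.e. $M^2\neq M$.

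Concretely, I would argue by contraposition. Assume $M^2=M$. Then for any homomorphism $\rho\colon M\to\{\pm1\}$ we have $\rho(M)=\rho(M^2)\subseteq\{\pm1\}^2=\{1\}$, so $\rho$ is trivial and in particular not an epimorphism. Since Proposition~\ref{prop_zapponi} makes the existence of such an epimorphism a necessary condition for Z-orientability, it follows that $\mathcal{D}$ is not Z-orientable. Equivalently, one may simply invoke the fact noted just before the corollary, that $M/M^2$ is isomorphic to $1$, $\mathbb{Z}_2$ or $\mathbb{Z}_2\times\mathbb{Z}_2$: the condition $M^2=M$ places us in the first case, so $M$ admits no index-two subgroup, and Corollary~\ref{coro:index2} finishes the argument.

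There is no real obstacle here; the statement is an immediate consequence of material already established, and the only point worth making explicit is the elementary group-theoretic fact that homomorphisms to $\{\pm1\}$ factor through $M/M^2$. I would keep the proof to one or two sentences, framed as an application of Proposition~\ref{prop_zapponi} (or of Corollary~\ref{coro:index2}).
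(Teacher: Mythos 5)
Your argument is correct and is essentially the paper's own proof: the paper's one-line justification is precisely that every homomorphism $\rho\colon M\to\{\pm1\}$ factors through $M/M^{2}$, which together with Zapponi's criterion (Proposition~\ref{prop_zapponi}) rules out Z-orientability when $M^{2}=M$. Your alternative remark via Corollary~\ref{coro:index2} is a harmless equivalent reformulation, not a different route.
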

\begin{proof}
Each homomorphism $\rho:M \to \{\pm 1\}$ factors through $M/M^{2}$.
\end{proof}

\medskip
\begin{exam} \label{example}
An interesting property of Z-orientability is that, unlike the monodromy group and other more sophisticated  
invariants, it can often be checked with a simple glance at the dessin. For instance the two dessins in  Figure~\ref{Fig:example} have the same passport $(4^2; 4^2; 4^2)$ and isomorphic monodromy groups (as can be checked using GAP \cite{GAP}; see below) but they are not Galois conjugate because while the left-hand dessin $\mathcal{D}$ is not Z-orientable (edges $1, 3, 6$ and $8$ are monofacial; see the first paragraph of Section \ref{Sec:orientable}), the right-hand dessin $\widetilde{\mathcal{D}}$ is, as the two faces miraculously share each of the eight edges. If $(S, \beta)$ is the Belyi pair corresponding to the dessin ${\mathcal D}$ of this example,  whose permutation representation is given by $\sigma_0=(1,2,3,4)(5,6,7,8)$, $\sigma_1=(1,5,2,6)(3,7,4,8) $, then $\widetilde{\mathcal{D}}$, with permutation representation $\widetilde{\sigma_0}=\sigma_{\infty}=(1,5,4,6)(2,8,3,7)$, $\widetilde{\sigma_1}=\sigma_{1}$, 
 corresponds to $(S,1/ \beta)$. That is, $\widetilde{\mathcal{D}}$ is twist-related to $\mathcal{D}$. 
These two dessins are of genus $g=2$, the monodromy group is isomorphic to $(({\mathbb Z}_{4}\times{\mathbb Z}_{2})\rtimes{\mathbb Z}_{2})\rtimes{\mathbb Z}_{2}$ and, moreover, the $M$-stabilizer of $1$ is $H=\langle (2,4)(6,8), (5,7)(6,8)\rangle \cong {\mathbb Z}_{2} \times {\mathbb Z}_{2}$.

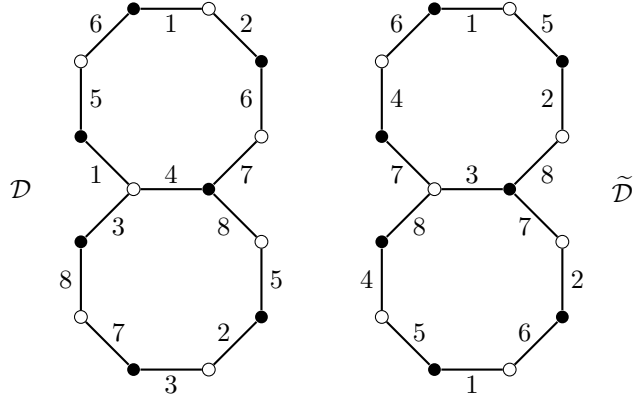
\begin{figure}[h!]
\begin{center}
\begin{tikzpicture}[scale=0.1, inner sep=0.6mm]

\node (A) at (-10,0) [shape=circle, draw] {};
\node (B) at (0,0) [shape=circle, fill=black] {};
\node (C) at (7,7) [shape=circle, draw] {};
\node (D) at (7,17) [shape=circle, fill=black] {};
\node (E) at (0,24) [shape=circle, draw] {};
\node (F) at (-10,24) [shape=circle, fill=black] {};
\node (G) at (-17,17) [shape=circle, draw] {};
\node (H) at (-17,7) [shape=circle, fill=black] {};
\node (I) at (-17,-7) [shape=circle, fill=black] {};
\node (J) at (-17,-17) [shape=circle, draw] {};
\node (K) at (-10,-24) [shape=circle, fill=black] {};
\node (L) at (0,-24) [shape=circle, draw] {};
\node (M) at (7,-17) [shape=circle, fill=black] {};
\node (N) at (7,-7) [shape=circle, draw] {};

\draw [thick] (A) to (B) to (C) to (D) to (E) to (F) to (G) to (H) to (A);
\draw [thick] (A) to (I) to (J) to (K) to (L) to (M) to (N) to (B);

\node at (-5,2) {$4$};
\node at (5,2) {$7$};
\node at (5,12) {$6$};
\node at (5,22) {$2$};
\node at (-5,22) {$1$};
\node at (-15,22) {$6$};
\node at (-15,12) {$5$};
\node at (-15,2) {$1$};

\node at (-12,-5) {$3$};
\node at (-19,-12) {$8$};
\node at (-12,-19) {$7$};
\node at (-5,-26) {$3$};
\node at (2,-19) {$2$};
\node at (9,-12) {$5$};
\node at (2,-5) {$8$};

\node at (-25,0) {$\mathcal D$};


\node (a) at (30,0) [shape=circle, draw] {};
\node (b) at (40,0) [shape=circle, fill=black] {};
\node (c) at (47,7) [shape=circle, draw] {};
\node (d) at (47,17) [shape=circle, fill=black] {};
\node (e) at (40,24) [shape=circle, draw] {};
\node (f) at (30,24) [shape=circle, fill=black] {};
\node (g) at (23,17) [shape=circle, draw] {};
\node (h) at (23,7) [shape=circle, fill=black] {};
\node (i) at (23,-7) [shape=circle, fill=black] {};
\node (j) at (23,-17) [shape=circle, draw] {};
\node (k) at (30,-24) [shape=circle, fill=black] {};
\node (l) at (40,-24) [shape=circle, draw] {};
\node (m) at (47,-17) [shape=circle, fill=black] {};
\node (n) at (47,-7) [shape=circle, draw] {};

\draw [thick] (a) to (b) to (c) to (d) to (e) to (f) to (g) to (h) to (a);
\draw [thick] (a) to (i) to (j) to(k) to (l) to (m) to (n) to (b);

\node at (35,2) {$3$};
\node at (45,2) {$8$};
\node at (45,12) {$2$};
\node at (45,22) {$5$};
\node at (35,22) {$1$};
\node at (25,22) {$6$};
\node at (25,12) {$4$};
\node at (25,2) {$7$};

\node at (28,-5) {$8$};
\node at (21,-12) {$4$};
\node at (28,-19) {$5$};
\node at (35,-26) {$1$};
\node at (42,-19) {$6$};
\node at (49,-12) {$2$};
\node at (42,-5) {$7$};

\node at (55,0) {$\widetilde{\mathcal D}$};

\end{tikzpicture}
\end{center}
\caption{A non-Z-orientable dessin $\mathcal{D}$ and a Z-orientable dessin $\widetilde{\mathcal{D}}$ with the same passport and the same monodromy group.}
\label{Fig:example}
\end{figure}
\end{exam}

By Theorem~\ref{th:squarecharact}, Z-orientability of dessins of genus~$0$ is encoded in the passport, as  the necessary condition of possessing only even degree vertices is also sufficient to ensure Z-orientability in this case (when all the vertices have even degree all the zeroes and poles of the rational function $\beta(1- \beta)$ have even order, hence $\beta(1- \beta)$ is a square). 
On the other hand, as we have seen in the previous example, the Z-orientability of a dessin cannot be determined in general by its passport and its monodromy group.  


\section{An alternative point of view}\label{alternative}

Here we outline an alternative approach to the Z-orientability properties of dessins, relying more on the concepts and language of group theory and combinatorics. As explained in Section \ref{Sec:dessins}, a dessin $\mathcal D$ may be identified with
a $5$-tuple of the form $(M, H, m_0, m_1, m_{\infty})$, where $M$ is a finite group with generators $m_0, m_1$ and $m_{\infty}$ satisfying $m_0m_1m_{\infty}=1$, and $H$ is a subgroup of $M$ with trivial core. 

We will now give some alternative characterizations of $Z$-orientability, implicitly using this correspondence to translate concepts about dessins from one language to another. The following result completes the proof of Theorem~\ref{mainthm}. Recall that a permutation group is {\em imprimitive} if it preserves a non-trivial equivalence relation, in which case it permutes the equivalence classes (called blocks of imprimitivity, or simply blocks).

\medskip

\begin{prop}\label{equivalent}
For any dessin $\mathcal D$, the following are equivalent (with the usual notation):
\begin{enumerate}

\item $\mathcal D$ is Z-orientable.

\item The faces of $\mathcal D$ can be labelled $+$ or $-$ so that consecutive faces around each vertex have different labels.

\item The edges of $\mathcal D$ can be labelled $+$ or $-$ so that consecutive edges around each vertex have different labels.

\item $M$ acts imprimitively on the edges of $\mathcal D$, with two blocks transposed by $m_0$ and $m_1$.

\item $M$ has a subgroup $N$ of index $2$ containing the element $m_{\infty}$ and the stabiliser $H$ of each edge.

\item $\mathcal D$ covers the unique dessin of degree $2$ and type $(2,2,1)$, shown in Figure~\ref{degree2dessin}.
\end{enumerate}
\end{prop}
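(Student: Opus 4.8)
The plan is to prove the chain of equivalences $(1)\Leftrightarrow(2)\Leftrightarrow(3)\Leftrightarrow(4)\Leftrightarrow(5)\Leftrightarrow(6)$ by establishing a convenient cycle of implications, exploiting the $5$-tuple description $(M,H,m_0,m_1,m_\infty)$ throughout. The logical backbone I would use is $(1)\Leftrightarrow(4)$ via Zapponi's criterion (Proposition~\ref{prop_zapponi}), then $(4)\Leftrightarrow(5)$ as a purely group-theoretic translation, then $(5)\Rightarrow(6)\Rightarrow(1)$ using the correspondence between dessins and subgroups, and finally handle the combinatorial reformulations $(2)$ and $(3)$ separately, showing $(3)\Leftrightarrow(1)$ directly and $(2)\Leftrightarrow(1)$ using planar duality.

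For $(1)\Leftrightarrow(4)$: by Proposition~\ref{prop_zapponi}, $\mathcal D$ is Z-orientable iff the rule $\sigma_0,\sigma_1\mapsto -1$ extends to an epimorphism $\rho\colon M\to\{\pm1\}$ with $H=\mathrm{Stab}_M(1)\le\ker\rho$. Setting $N=\ker\rho$, this is an index-$2$ subgroup with $m_0,m_1\notin N$ but $m_0m_1=m_\infty^{-1}\in N$, and $H\le N$; conversely any such $N$ gives the required $\rho$. Now $H\le N$ with $[M:N]=2$ is exactly the condition that $N$ is a union of blocks for the action on $M/H$, and since $m_0,m_1$ interchange the two cosets of $N$, they transpose the two blocks $N/H$ and $(M\setminus N)/H$ of size $n/2$ — this is precisely $(4)$. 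The containment of $m_\infty$ in $N$ in $(5)$ is automatic from $m_\infty=(m_0m_1)^{-1}$, so $(4)\Leftrightarrow(5)$ is immediate, and the only mild subtlety is checking that "two blocks transposed by $m_0$ and $m_1$" forces $n$ even and the block structure to be exactly the cosets of an index-$2$ subgroup; this follows because the kernel of the action on the two blocks is an index-$2$ normal subgroup containing $H$.

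For $(5)\Leftrightarrow(6)$: the unique dessin $\mathcal E$ of degree $2$ and type $(2,2,1)$ corresponds to the $5$-tuple $(\mathbb Z_2,\{1\},\bar 1,\bar 1,\bar 0)$, i.e.\ $\sigma_0=\sigma_1=(1\,2)$, $\sigma_\infty=1$ — one edge-orbit of size $2$, a single face. A dessin $\mathcal D$ covers $\mathcal E$ iff there is a surjection of the corresponding coset actions compatible with $(m_0,m_1,m_\infty)$, equivalently (in the subgroup-of-$F_2$ or triangle-group picture of Section~\ref{Sec:dessins}) iff the subgroup defining $\mathcal D$ lies inside the subgroup defining $\mathcal E$; concretely this says exactly that $M$ has an index-$2$ subgroup $N$ with $m_0,m_1\mapsto$ the nontrivial element and $H\le N$, which is $(5)$. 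I would spell this out using the description of covering of dessins as inclusion of the associated finite-index subgroups of $F_2$ (or of the triangle group), noting that the epimorphism $M\to\mathbb Z_2$ with $m_0,m_1\mapsto\bar1$ together with $H\le\ker$ is precisely a morphism of $5$-tuples onto that of $\mathcal E$. Then $(6)\Rightarrow(1)$ also follows just by observing $\mathcal E$ itself is Z-orientable (two faces $+,-$... wait, one face) — more cleanly, $(6)\Rightarrow(5)\Rightarrow(1)$, so I would not argue $(6)\Rightarrow(1)$ independently.

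Finally, for the combinatorial parts: $(3)\Leftrightarrow(1)$ is Theorem~\ref{mainthm}(2), proved by observing that a sign labelling of edges alternating around every vertex is the same thing as a $2$-colouring of the edge set realized by the homomorphism $\rho$ of Proposition~\ref{prop_zapponi} restricted to edges (an edge $e$ gets the sign $\rho$ of any group element carrying a fixed base edge to $e$), well-defined precisely because $H\le\ker\rho$; consistency around a white vertex of degree $d$ forces $d$ even and the alternation, matching $\sigma_0\mapsto-1$, similarly for black. For $(2)\Leftrightarrow(1)$ I would use the dual dessin: labelling faces $\pm$ so that consecutive faces around each vertex alternate is the statement that the dual map is bipartite with the $\pm$ labels as the dual vertex $2$-colouring, and "each edge meets faces of different sign" ($=$ Z-orientability, i.e.\ $(1)$) is the same condition read on the dual, since around a vertex the faces are cyclically ordered and adjacent ones in this cyclic order are exactly those sharing an edge at that vertex; one must check the cyclic order of faces at a vertex of degree $d$ has length $d$ and that consecutive-in-cyclic-order faces are edge-adjacent, which is a standard feature of ribbon graphs. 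I expect the main obstacle to be precisely this last bookkeeping with the local ribbon-graph structure at a vertex — making rigorous the claim that "consecutive faces around a vertex" and "faces sharing an edge at that vertex" coincide, and that the resulting parity conditions line up globally; everything else is routine translation through the $5$-tuple/subgroup dictionary already set up in Section~\ref{Sec:dessins}.
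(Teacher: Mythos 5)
Your proposal is correct, but it runs in the opposite logical direction to the paper. You take Zapponi's criterion (Proposition~\ref{prop_zapponi}), proved earlier by the analytic/fiber-product argument, as the backbone: $(1)\Leftrightarrow(4)\Leftrightarrow(5)$ by identifying $N=\ker\rho$, then $(5)\Leftrightarrow(6)$ through the dictionary between coverings and inclusions of finite-index subgroups of $F_2$, with $(2)$ and $(3)$ treated as combinatorial reformulations. The paper instead gives a short, self-contained combinatorial chain that never invokes Proposition~\ref{prop_zapponi}: $(2)$ is a restatement of the definition; $(2)\Leftrightarrow(3)$ by transferring signs from faces to edges using the orientation around white vertices; $(3)\Leftrightarrow(4)$ because ``same label'' is exactly the invariant equivalence relation ($\sigma_0,\sigma_1$ send each edge to one of opposite sign, hence transpose the two classes); $(4)\Leftrightarrow(5)$ with $N$ the kernel of the action on the two blocks; and $(5)\Leftrightarrow(6)$ since $H\le N$ is the covering condition. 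The paper then observes that Proposition~\ref{prop_zapponi} is itself an immediate consequence of $(1)\Leftrightarrow(5)$, so your dependency is inverted; this is not circular (the criterion was established independently in Section~3), but it forfeits the purely combinatorial re-derivation of Zapponi's criterion that the paper's route provides, at the price, in your version, of resting the whole proposition on the heavier function-theoretic proof. Two small points to tighten: in $(6)\Rightarrow(5)$ you need the remark that the even-word subgroup $K\le F_2$ is normal, so $\Gamma\le K$ forces the core of $\Gamma$ (the kernel of $F_2\to M$) into $K$ and the map $F_2\to\{\pm1\}$ descends to the required $\rho$ on $M$; and for the converse directions of $(3)$ and $(2)$ the cleanest argument is the block-system observation above rather than trying to read the labelling off $\rho$, which as stated only gives the forward direction.
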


\begin{figure}[h!]
\begin{center}
\begin{tikzpicture}[scale=0.4, inner sep=0.8mm]

\node (a) at (0,0) [shape=circle, draw] {};
\node (b) at (5,0) [shape=circle, fill=black] {};

\draw [thick] (5,0) arc (45:134:3.45);
\draw [thick] (5,0) arc (-45:-134:3.45);

\end{tikzpicture}
\end{center}
\caption{The unique dessin of degree $2$ and type $(2,2,1)$} 
\label{degree2dessin}
\end{figure}
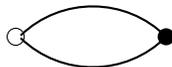

(There are similar equivalent conditions, with obvious slight modifications, for the Z-orientability of the dessins which are twist-related to $\mathcal D$.)

\medskip

\begin{proof}
The proof that each statement is equivalent to the next is straightforward, so we will just outline the basic ideas. For instance, (2) is essentially a restatement of the definition of Z-orientability. To show that (2) implies (3), one can transfer labels from faces to edges by following the orientation around the white vertices (or equivalently following the reverse orientation around the black vertices), and giving the sign on each face to the next edge; the converse is similar. For the equivalence of (3) and (4), having the same label is the required relation on edges. The subgroup $N$ in (5) is the kernel of the action on blocks, and the two blocks in (4) are its orbits. The inclusion $H\le N$ in (5) is equivalent to the covering in (6).
\end{proof}

\medskip

The subgroups $H\le N\le M$ appearing here are the images in $M$ of the subgroups $\Gamma\le K\le \Delta$ under the epimorphism $\omega:\Delta\to M$ defined earlier. As an application of this result, Proposition~\ref{prop_zapponi} is an immediate consequence of (1) being equivalent to (5). Similarly, the Galois invariance results in Section 3 follow immediately from the equivalence of (1) and (6), together with the fact that the absolute Galois group preserves degree, type and coverings. The two blocks for the Z-orientable dessin $\widetilde{\mathcal{D}}$ in Example~\ref{example} are the sets $\{1,2,3,4\}$ and $\{5,6,7,8\}$. The dessin in Figure~\ref{degree2dessin} corresponds to the subgroup of the free group of rank $2$ consisting of the words of even length in the standard generators. If we place this dessin in $\widehat{\mathbb C}$ with its white and black vertices at $-1$ and $1$, and its face-centres at $0$ and $\infty$, it corresponds to the Belyi function $\pi_1:\widehat{\mathbb C}\to\widehat{\mathbb C}$ given by $z\mapsto (z+1)^2/4z$ in the proof of Proposition~\ref{prop_zapponi}. A quarter turn, with white and black vertices moved to $-i$ and $i$, gives the Belyi function $\pi_2: z\mapsto (z^2-1)/4iz$ in the same proof.


\section{The twist-invariant orientability type}\label{tot}

One might want to avoid issues related to twists (see Example \ref{example}) by slightly modifying the notion of Z-orientability in the following way.

\medskip

\begin{defi}
The \emph{twist-invariant orientability type} of a dessin $\mathcal{D}$ is the number of dessins  in the twist-related triple $\{\mathcal{D}, \mathcal{D}', \mathcal{D}'' \}$ that are Z-orientable. We denote it by ${\rm tot}(\mathcal{D})$. Note that it is sufficient to restrict attention to $\mathcal D$ and the dessins ${\mathcal D}'$ and ${\mathcal D}''$ obtained from it by cyclic permutation of the generators $\sigma_0, \sigma_1$ and $\sigma_{\infty}$: the other three twist-related dessins are obtained from these by transposing vertex colours, which has no effect on Z-orientability.
\end{defi}

\medskip
\begin{prop}\label{totinvariant}
The twist-invariant orientability type is a Galois invariant, and ${\rm tot}(\mathcal{D}) \neq 2$ for every dessin $\mathcal{D}$.
\end{prop}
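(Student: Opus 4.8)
The plan is to prove the two assertions separately, the Galois invariance being the easy part and the statement ${\rm tot}(\mathcal D)\neq 2$ being the real content.

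\medskip

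\noindent\textbf{Galois invariance.} For any $\tau\in{\rm Gal}(\overline{\mathbb Q}/{\mathbb Q})$, the dessin $({\mathcal D}')^{\tau}$ is obtained from ${\mathcal D}^{\tau}$ by the same twist (post-composition of the Belyi function with $(\beta-1)/\beta$), since the Galois action commutes with post-composition by a M\"obius transformation defined over $\mathbb Q$; likewise for ${\mathcal D}''$. Hence $\tau$ sends the triple $\{{\mathcal D},{\mathcal D}',{\mathcal D}''\}$ to $\{{\mathcal D}^{\tau},({\mathcal D}')^{\tau},({\mathcal D}'')^{\tau}\}$, and since Z-orientability is a Galois invariant (Corollary~\ref{coro:orient}), the number of Z-orientable dessins in each triple is the same. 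So ${\rm tot}({\mathcal D}^{\tau})={\rm tot}({\mathcal D})$.

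\medskip

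\noindent\textbf{The value $2$ is excluded.} I would work with the monodromy group $M$ and the quotient $V:=M/M^2$, which by the remark preceding Corollary~\ref{le:msq} is one of $1,\mathbb Z_2,\mathbb Z_2\times\mathbb Z_2$. Using the criterion in Proposition~\ref{prop_zapponi} (or equivalently part (5) of Proposition~\ref{equivalent}), ${\mathcal D}$ is Z-orientable if and only if there is an epimorphism $\rho:M\to\{\pm1\}$ with $\rho(\sigma_0)=\rho(\sigma_1)=-1$ (equivalently $\rho(\sigma_\infty)=1$) and ${\rm Stab}_M(1)\le\ker\rho$. Since any such $\rho$ factors through $V$, and homomorphisms $M\to\{\pm1\}$ correspond to index-$2$ subgroups of $M$ containing $M^2{\rm Stab}_M(1)$, the relevant data live in the small group $\overline V:=M/(M^2\,{\rm Stab}_M(1)^{M})$ (the quotient by $M^2$ and the normal closure of the edge-stabiliser), which is again $1$, $\mathbb Z_2$ or $\mathbb Z_2\times\mathbb Z_2$, generated by the images $\bar\sigma_0,\bar\sigma_1$ of $\sigma_0,\sigma_1$, with $\bar\sigma_\infty=(\bar\sigma_0\bar\sigma_1)^{-1}$. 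The three twisted dessins ${\mathcal D},{\mathcal D}',{\mathcal D}''$ are Z-orientable precisely when $\overline V$ admits a surjection to $\{\pm1\}$ killing $\bar\sigma_\infty$, $\bar\sigma_0$, $\bar\sigma_1$ respectively (and mapping the other two generators to $-1$). I would then simply enumerate the possibilities for $\overline V$: if $\overline V=1$, none of the three is Z-orientable (tot $=0$); if $\overline V=\mathbb Z_2$, then $\bar\sigma_0,\bar\sigma_1$ each equal $0$ or $1$, and one checks directly that the number of indices $i\in\{0,1,\infty\}$ with $\bar\sigma_i=0$ is $0$ or $1$ — it cannot be $2$, because if two of $\bar\sigma_0,\bar\sigma_1,\bar\sigma_\infty$ vanish then so does the third, forcing $\overline V=1$; if $\overline V=\mathbb Z_2\times\mathbb Z_2$, then $\bar\sigma_0,\bar\sigma_1$ generate it, so they are distinct nonzero elements and $\bar\sigma_\infty=\bar\sigma_0+\bar\sigma_1$ is the third nonzero element, and for each $i$ there is a unique surjection $\overline V\to\{\pm1\}$ with $\bar\sigma_i$ in its kernel, which automatically sends the other two generators to $-1$; hence all three are Z-orientable (tot $=3$). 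In every case ${\rm tot}({\mathcal D})\in\{0,1,3\}$.

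\medskip

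\noindent The step I expect to require the most care is bookkeeping the passage from the condition on $M$ (epimorphism onto $\{\pm1\}$ containing ${\rm Stab}_M(1)$ in its kernel, with the prescribed values on the generators) down to a statement purely about the tiny group $\overline V$ and the images of $\sigma_0,\sigma_1$ there; once that reduction is made, the case analysis is a two-line computation in $\mathbb Z_2$-vector spaces of dimension $\le2$, and the key combinatorial fact — that one cannot have exactly two of three elements summing to zero vanish while the ambient group is nontrivial — is what rules out the value $2$.
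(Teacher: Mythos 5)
Your proof is correct, but its core part (excluding the value $2$) takes a genuinely different route from the paper. The paper argues function-theoretically from Theorem~\ref{th:squarecharact}: the three functions $\beta(1-\beta)$, $(\beta-1)/\beta^{2}$ and $-\beta/(1-\beta)^{2}$ attached to $\mathcal D$, $\mathcal D'$, $\mathcal D''$ have product equal to $1$, so if two of them are squares of meromorphic functions then so is the third; this one-line identity rules out ${\rm tot}=2$ at once, and Galois invariance also follows directly since being a square is preserved by the Galois action. You instead argue group-theoretically via Zapponi's criterion (Proposition~\ref{prop_zapponi}), reducing to the quotient $\overline V=M/\bigl(M^{2}\,{\rm Stab}_M(1)\bigr)$ (note that $M^{2}{\rm Stab}_M(1)$ is already normal in $M$, being a subgroup containing $M^{2}$, so taking the normal closure is superfluous) and then doing linear algebra over ${\mathbb Z}_2$ with the images of $\sigma_0,\sigma_1,\sigma_\infty$, which sum to zero; the observation that two of three elements summing to zero cannot vanish unless the third does is exactly what excludes the value $2$. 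Your reduction implicitly uses that the three twist-related dessins share the same monodromy group, the same edge-stabiliser up to conjugacy, and a cyclically permuted generating triple --- a fact the paper also relies on (Section 2 and Theorem~\ref{teo15}) --- and in effect it anticipates the finer trichotomy proved later in Theorem~\ref{teo15} and Corollary~\ref{teo16}: your case analysis on $\overline V$ gives ${\rm tot}\in\{0,1,3\}$ together with which twists are Z-orientable, so it buys more structural information at the cost of more bookkeeping, whereas the paper's product identity is shorter and self-contained at this point of the text. Your Galois-invariance argument is essentially the paper's, made slightly more explicit by noting that twisting commutes with the Galois action because the twisting M\"obius transformations are defined over $\mathbb Q$.
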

\begin{proof}
Using Theorem \ref{th:squarecharact}, one can easily show that if two of the three Belyi functions $\beta$, $(\beta-1)/\beta$, $1/(1-\beta)$ correspond to a Z-orientable dessin then the third one also does. Note that the three functions
$$
\beta(1-\beta), \quad \displaystyle\frac{\beta-1}{\beta} \left( 1- \displaystyle\frac{\beta-1}{\beta} \right)= \displaystyle\frac{\beta-1}{\beta^2}, \quad \left( \displaystyle\frac{1}{1-\beta} \right) \left( 1- \displaystyle\frac{1}{1-\beta}  \right) = \displaystyle\frac{-\beta}{(1-\beta)^2}
$$
have product equal to $1$, so if two of them are squares of meromorphic functions then so is the third.
Obviously, Theorem~\ref{th:squarecharact} also shows that ${\rm tot}$ is a Galois invariant.
\end{proof}

\medskip
\begin{rema}
Note that for a dessin ${\mathcal D}$ with at least one face of odd degree, ${\rm tot}$ is exactly the same invariant as plain Z-orientability. In fact, if one face has odd degree, the dessins ${\mathcal D}'$ and ${\mathcal D}''$ will each have a vertex of odd degree, so they cannot be Z-orientable. It is an immediate consequence of Corollary \ref{coro:index2} that for regular dessins ${\rm tot}=0$ if and only if the monodromy group does not contain a subgroup of index 2. Also, we may observe that ${\rm tot}=3$ is equivalent to the condition that $\beta$ and $1-\beta$ are the squares of meromorphic functions (by Theorem \ref{th:squarecharact} and the proof of Proposition \ref{totinvariant}), which can be restated as the existence of two meromorphic functions $f$ and $g$ on $S$ such that $f^{2}+g^{2}=1$ and $\beta=f^{2}$.  
\end{rema}

\medskip

In the following, we provide necessary and sufficient conditions, in terms of the monodromy group, for the Z-orientability properties of a dessin $\mathcal D$ and its twist-related dessins ${\mathcal D}'$ and ${\mathcal D}''$.

\medskip
\begin{theo}\label{teo15}
Let ${\mathcal D}$ be a dessin with monodromy group $M=\langle \sigma_{0},\sigma_{1}\rangle$ and let $H$ be the stabiliser in $M$ of a point.
\begin{enumerate}
\item If $M=M^{2}$, then ${\mathcal D}$ is not Z-orientable.

\item If $M/M^{2}\cong {\mathbb Z}_{2}$, then exactly one of the following holds:
$$\left\{\begin{array}{l}
{\mathcal D} \; \mbox{is Z-orientable, which occurs when }  H\le M^{2} \mbox{ and } \sigma_{\infty} \in M^{2}.\\
{\mathcal D}' \; \mbox{is Z-orientable, which occurs when } H\le M^{2} \mbox{ and } \sigma_{0} \in M^{2}.\\
{\mathcal D}'' \; \mbox{is Z-orientable, which occurs when }  H\le M^{2} \mbox{ and } \sigma_{1} \in M^{2}.\\
{\mathcal D}, {\mathcal D}', {\mathcal D}'' \; \mbox{are all non-Z-orientable, which occurs when $H\not\le M^{2}$.} 
\end{array}
\right.
$$

\item If $M/M^{2}\cong {\mathbb Z}_{2} \times {\mathbb Z}_{2}$, then
$$\left\{\begin{array}{l}
{\mathcal D}\; \mbox{is Z-orientable  if and only if}\; H\le \langle M^{2}, \sigma_{\infty}\rangle.\\
{\mathcal D}'\; \mbox{is Z-orientable if and only if}\; H\le \langle M^{2}, \sigma_{0}\rangle.\\
{\mathcal D}''\; \mbox{is Z-orientable if and only if}\; H\le \langle M^{2}, \sigma_{1}\rangle.\\
\end{array}
\right.
$$

\end{enumerate}
\end{theo}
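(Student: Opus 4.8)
The plan is to reduce everything to Zapponi's criterion (Proposition~\ref{prop_zapponi}) and then argue inside the elementary abelian $2$-group $M/M^{2}$. The first step is to record what Zapponi's criterion says for the twisted dessins. The three dessins $\mathcal{D},\mathcal{D}',\mathcal{D}''$ have the same monodromy group $M$ acting on the same set of edges, hence the same point stabiliser $H$, and (as recalled in Section~\ref{Sec:dessins}) their permutation representations are given by the pairs of ``vertex'' generators $(\sigma_{0},\sigma_{1})$, $(\sigma_{1},\sigma_{\infty})$ and $(\sigma_{\infty},\sigma_{0})$ respectively. So Proposition~\ref{prop_zapponi}, applied to each of them in turn, yields: $\mathcal{D}$ (resp.\ $\mathcal{D}'$, resp.\ $\mathcal{D}''$) is Z-orientable if and only if there is an epimorphism $\rho\colon M\to\{\pm1\}$ with $H\le\ker\rho$ and $\rho(\sigma_{0})=\rho(\sigma_{1})=-1$ (resp.\ $\rho(\sigma_{1})=\rho(\sigma_{\infty})=-1$, resp.\ $\rho(\sigma_{\infty})=\rho(\sigma_{0})=-1$); in view of $\sigma_{0}\sigma_{1}\sigma_{\infty}=1$, the remaining generator is then automatically sent to $+1$.

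Next I would translate the existence of such epimorphisms into a statement about $M/M^{2}$. Every homomorphism $M\to\{\pm1\}$ is trivial on $M^{2}$, so it is the composite of the projection $M\to M/M^{2}$ with a homomorphism $M/M^{2}\to\{\pm1\}$, and its kernel in $M$ is the full preimage of the kernel of the latter. Writing $\bar g$ for the image of $g\in M$ in $M/M^{2}$, the elements $\bar\sigma_{0},\bar\sigma_{1}$ generate $M/M^{2}$ (since $M=\langle\sigma_{0},\sigma_{1}\rangle$) and $\bar\sigma_{0}\bar\sigma_{1}\bar\sigma_{\infty}=1$. As recalled before the statement, $M/M^{2}$ is isomorphic to $1$, $\mathbb{Z}_{2}$ or $\mathbb{Z}_{2}\times\mathbb{Z}_{2}$, and I would treat the three possibilities separately.

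If $M/M^{2}=1$ there are no epimorphisms $M\to\{\pm1\}$, so by the first paragraph none of the three dessins is Z-orientable; this is part~(1) (and merely restates Corollary~\ref{le:msq}). If $M/M^{2}\cong\mathbb{Z}_{2}$ there is exactly one epimorphism $\rho\colon M\to\{\pm1\}$, with $\ker\rho=M^{2}$; since $\bar\sigma_{0},\bar\sigma_{1}$ generate $\mathbb{Z}_{2}$ and $\bar\sigma_{0}\bar\sigma_{1}\bar\sigma_{\infty}=1$, exactly one of $\sigma_{0},\sigma_{1},\sigma_{\infty}$ lies in $M^{2}$. Hence $\rho$ satisfies the sign condition of the first paragraph for exactly one of $\mathcal{D},\mathcal{D}',\mathcal{D}''$ --- the one whose two distinguished generators avoid $M^{2}$ --- and that dessin is Z-orientable precisely when in addition $H\le\ker\rho=M^{2}$, while if $H\not\le M^{2}$ the single candidate fails for all three; matching the three sign configurations with the three dessins produces exactly the four mutually exclusive alternatives of part~(2). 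Finally, if $M/M^{2}\cong\mathbb{Z}_{2}\times\mathbb{Z}_{2}$, then $\bar\sigma_{0},\bar\sigma_{1}$ are distinct non-trivial elements forming a basis and $\bar\sigma_{\infty}=\bar\sigma_{0}\bar\sigma_{1}$ is the third non-trivial element; the three epimorphisms $M\to\{\pm1\}$ have kernels the preimages of $\langle\bar\sigma_{\infty}\rangle,\langle\bar\sigma_{0}\rangle,\langle\bar\sigma_{1}\rangle$, namely $\langle M^{2},\sigma_{\infty}\rangle,\langle M^{2},\sigma_{0}\rangle,\langle M^{2},\sigma_{1}\rangle$. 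Among these, the unique one sending both $\sigma_{0}$ and $\sigma_{1}$ to $-1$ is the one with kernel $\langle M^{2},\sigma_{\infty}\rangle$, so by the first paragraph $\mathcal{D}$ is Z-orientable if and only if $H\le\langle M^{2},\sigma_{\infty}\rangle$; the assertions for $\mathcal{D}'$ and $\mathcal{D}''$ come out the same way from the generators $(\sigma_{1},\sigma_{\infty})$ and $(\sigma_{\infty},\sigma_{0})$, giving part~(3).

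There is no deep obstacle here --- the argument is just Zapponi's criterion together with the structure of the two-generated quotient $M/M^{2}$. The step that most needs care is the bookkeeping in the $M/M^{2}\cong\mathbb{Z}_{2}$ case: one must check that at most one (and, once $H\le M^{2}$, exactly one) of the three dessins can be Z-orientable, and pair each alternative of part~(2) with the correct position of $\sigma_{0},\sigma_{1},\sigma_{\infty}$ modulo $M^{2}$. Throughout, one also has to keep straight which two of $\sigma_{0},\sigma_{1},\sigma_{\infty}$ play the role of vertex generators for each of $\mathcal{D},\mathcal{D}',\mathcal{D}''$, since it is precisely this choice that selects which epimorphism is the relevant candidate.
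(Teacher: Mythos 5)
Your proposal is correct and follows essentially the same route as the paper: apply Zapponi's criterion (Proposition~\ref{prop_zapponi}) to each of $\mathcal{D}$, $\mathcal{D}'$, $\mathcal{D}''$ with their respective generating pairs, observe that every homomorphism $M\to\{\pm1\}$ factors through $M/M^{2}$, and analyse the three possibilities for $M/M^{2}$ exactly as in the paper's case split. Your bookkeeping in the $\mathbb{Z}_{2}$ and $\mathbb{Z}_{2}\times\mathbb{Z}_{2}$ cases matches the paper's identification of the unique (resp.\ three) epimorphisms and their kernels $\langle M^{2},\sigma_{j}\rangle$.
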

\begin{proof}
Part (1) is just Corollary \ref{le:msq}.

In Case (2) there is a unique surjective homomorphism $\rho:M \to \{\pm 1\}$ whose kernel is $M^{2}$. There is exactly one pair $(a,b) \in 
\{(\sigma_{0},\sigma_{1}), (\sigma_{1},\sigma_{\infty}), (\sigma_{\infty},\sigma_{0})\}$ with the property that $\rho(a)=\rho(b)=-1$. Let $\widetilde{\mathcal D} \in \{{\mathcal D},{\mathcal D}', {\mathcal D}''\}$ be the dessin with permutation representation pair $(a,b)$. Then, by Zapponi's criterion, the other two dessins are not Z-orientable, and the dessin $\widetilde{\mathcal D}$ is Z-orientable if and only if $H\le M^{2}$.

In Case (3) we have that $\sigma_{0},\sigma_{1},\sigma_{\infty} \notin M^{2}$. There are exactly three surjective homomorphisms 
 $\overline{\rho}_{0}, \overline{\rho}_{1}, \overline{\rho}_{\infty}:M/M^{2} \to \{\pm 1\}$, where their kernels are, respectively, the cyclic groups generated by $\sigma_{0} M^{2}$, $\sigma_{1} M^{2}$ and $\sigma_{\infty} M^{2}$. Then, there are exactly three surjective homomorphisms $\rho_{j}=\overline{\rho}_{j} \circ \pi$, where $\pi:M \to M/M^{2}$ is the natural quotient homomorphism. As the kernels of these three homomorphisms are, respectively, $\langle M^{2}, \sigma_{0}\rangle$, $\langle M^{2}, \sigma_{1}\rangle$ and $\langle M^{2}, \sigma_{\infty}\rangle$, the desired result follows from Zapponi's criterion.
\end{proof}

\medskip
\begin{exam}\label{examplectd}
Let $\mathcal D$ and $\widetilde{\mathcal D}$ be the dessins in Example~\ref{example} and let $M$ be their common monodromy group. Since $M$ is a non-cyclic $2$-group we have $M/M^2\cong{\mathbb Z}_2\times{\mathbb Z}_2$,  so Case~(3) of Theorem~\ref{teo15} applies. We saw earlier that $\mathcal D$ is not Z-orientable, whereas $\widetilde{\mathcal D}$ is. Since ${\mathcal D}'$ differs from $\widetilde{\mathcal D}$ only by transposition of vertex-colours, it is also Z-orientable, so $1\le {\rm tot}({\mathcal D})<3$. It follows from Proposition~\ref{totinvariant} that ${\rm tot}({\mathcal D})=1$, so ${\mathcal D}''$ is not Z-orientable. This is confirmed  by checking that  $H\le \langle M^{2}, \sigma_j\rangle$ only for $j=0$. (These results, obtained by hand, were verified by using GAP.)
\end{exam}

\medskip

As a consequence of Theorem~\ref{teo15}, we obtain the following.

\medskip
\begin{coro}\label{teo16}
Let ${\mathcal D}$ be a dessin with monodromy group $M$ and let $H$ be the stabiliser in $M$ of a point. 
If $H\le M^{2}$, then
$${\rm tot}({\mathcal D})=\left\{\begin{array}{l}
0 \;\mbox{if and only if}\; M= M^{2}.\\
1 \;\mbox{if and only if}\; M/M^{2}\cong {\mathbb Z}_{2}.\\
3 \;\mbox{if and only if}\; M/M^{2}\cong {\mathbb Z}_{2} \times {\mathbb Z}_{2}.\\
\end{array}
\right.
$$
In particular, for regular dessins the ${\rm tot}$ invariant is completely determined by the monodromy group.
\end{coro}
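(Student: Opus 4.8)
The plan is to derive the three equivalences directly from Theorem \ref{teo15} by distinguishing the three isomorphism types of $M/M^2$, and then to specialise to regular dessins.

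First I would observe that under the standing hypothesis $H\le M^2$ we automatically have $H\le\langle M^2,\sigma_\infty\rangle$, $H\le\langle M^2,\sigma_0\rangle$ and $H\le\langle M^2,\sigma_1\rangle$, since $M^2$ is contained in each of these subgroups. This means that, once we know the isomorphism type of $M/M^2$, the inclusion conditions appearing in Theorem \ref{teo15} are either all satisfied or controlled purely by whether each of $\sigma_0,\sigma_1,\sigma_\infty$ lies in $M^2$. Concretely: if $M=M^2$, part (1) of Theorem \ref{teo15} gives that $\mathcal D$ is not Z-orientable; the same argument applies verbatim to $\mathcal D'$ and $\mathcal D''$ (their monodromy group is also $M$, and equals its own square), so none of the three is Z-orientable and ${\rm tot}(\mathcal D)=0$. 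Conversely, if ${\rm tot}(\mathcal D)=0$ then in particular $\mathcal D$ is not Z-orientable, so by Corollary \ref{le:msq} and the trichotomy for $M/M^2$ we must rule out $M/M^2\cong\mathbb Z_2$ and $M/M^2\cong\mathbb Z_2\times\mathbb Z_2$ — which is exactly what the next two cases will do — leaving $M=M^2$.

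Next, in the case $M/M^2\cong\mathbb Z_2$, Theorem \ref{teo15}(2) together with $H\le M^2$ forces exactly one of the three dessins to be Z-orientable (the branch depending on which generator lands in $M^2$, but in all sub-branches with $H\le M^2$ the conclusion is that precisely one of $\mathcal D,\mathcal D',\mathcal D''$ is Z-orientable), so ${\rm tot}(\mathcal D)=1$. For the case $M/M^2\cong\mathbb Z_2\times\mathbb Z_2$, Theorem \ref{teo15}(3) gives three ``if and only if'' statements; since $H\le M^2\le\langle M^2,\sigma_j\rangle$ for each $j$, all three conditions hold and all three dessins are Z-orientable, whence ${\rm tot}(\mathcal D)=3$. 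The converse directions then follow because the values $0,1,3$ attached to the three mutually exclusive and exhaustive possibilities for $M/M^2$ are themselves distinct, so knowing ${\rm tot}(\mathcal D)$ pins down $M/M^2$.

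Finally, for regular dessins one has $H=\{1\}$ (condition (i) in the characterisation of regular dessins in Section \ref{Sec:dessins}), so the hypothesis $H\le M^2$ is automatic, and the displayed trichotomy applies unconditionally. Since $M/M^2$ is an invariant of $M$ as an abstract group, ${\rm tot}(\mathcal D)$ is then a function of the abstract monodromy group alone, which is the asserted conclusion. I do not anticipate a serious obstacle here: the only point requiring a little care is the bookkeeping in the $M/M^2\cong\mathbb Z_2$ case, making sure that every sub-branch of Theorem \ref{teo15}(2) compatible with $H\le M^2$ yields exactly one Z-orientable dessin among the triple, and that the non-Z-orientability of $\mathcal D'$ and $\mathcal D''$ in the $M=M^2$ case really does follow from the same Corollary \ref{le:msq} applied to their common monodromy group $M$.
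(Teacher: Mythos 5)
Your proposal is correct and follows exactly the route the paper intends: the corollary is stated there as an immediate consequence of Theorem~\ref{teo15}, and your case analysis (all three dessins non-Z-orientable when $M=M^2$, exactly one Z-orientable branch of Theorem~\ref{teo15}(2) when $M/M^2\cong\mathbb{Z}_2$ and $H\le M^2$, all three inclusions automatic in case (3), plus the observation that the three mutually exclusive possibilities yield the distinct values $0,1,3$) is precisely the deduction the paper leaves implicit, including the specialisation to regular dessins via $H=\{1\}$.
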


\medskip
\begin{rema}
We note that Theorem~\ref{teo15} and Corollary~\ref{teo16} imply that the property of the stabiliser $H$ being a subgroup of $M^2$ is Galois invariant. If $M^2=M$ there is nothing to prove. If $M/M^2\cong{\mathbb Z}_2$ then $H\le M^2$ if and only if ${\rm tot}({\mathcal D})=1$, which is a Galois invariant property. Similarly, if $M/M^2\cong{\mathbb Z}_2\times{\mathbb Z}_2$ then $H\le M^2$ implies that ${\rm tot}({\mathcal D})=3$; the converse is also true since, by Theorem~\ref{teo15}, in this situation if ${\rm tot}(D)=3$ then $H \le \langle M^{2}, \sigma_0\rangle \cap \langle M^{2}, \sigma_1\rangle = M^{2}$. (Alternatively, this Galois invariance can be proved by interpreting $H$ as the covering group of the minimal regular cover of $\mathcal D$ (see~\cite[Appendix]{JS}), and using the fact that $M^2$ is a characteristic subgroup of $M$.)
\end{rema}

\medskip

As a consequence of the above, we also have the following fact.

\medskip
\begin{coro}\label{cubre}
Let ${\mathcal D}=(M,H,m_{0},m_{1},m_{\infty})$ be a dessin such that $H$ is not a subgroup of $M^{2}$. Let us consider the new dessin (having the same abstract monodromy group) given by the tuple 
${\mathcal D}_{0}=(M,H_{0}=H \cap M^{2},m_{0},m_{1},m_{\infty})$.
\begin{enumerate}
\item If $M/M^{2}\cong {\mathbb Z}_{2}$, then ${\rm tot}({\mathcal D}_{0})=1$, ${\rm tot}({\mathcal D})=0$ and ${\mathcal D}_{0}$ is a two-fold cover of ${\mathcal D}$.

\item If $M/M^{2}\cong {\mathbb Z}_{2}\times{\mathbb Z}_{2}$, then ${\rm tot}({\mathcal D}_{0})=3$, ${\rm tot}({\mathcal D})\in \{0,1\}$ and ${\mathcal D}_{0}$ is a two-fold or a ${\mathbb Z}_{2}\times{\mathbb Z}_{2}$-fold cover of ${\mathcal D}$.

\end{enumerate}
\end{coro}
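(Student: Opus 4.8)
The plan is to combine Corollary~\ref{teo16} with the elementary group theory of the quotient $M/M^2$, taking advantage of the fact that $M$ is fixed (as an abstract group with its distinguished generators $m_0,m_1,m_\infty$) and only the stabiliser is modified. First I would record the key properties of $H_0=H\cap M^2$. Since $M^2$ is a normal (indeed characteristic) subgroup of $M$, the core of $H_0$ in $M$ is contained in the core of $H$ in $M$, which is trivial; hence $(M,H_0,m_0,m_1,m_\infty)$ is a legitimate dessin ${\mathcal D}_0$, with the same abstract monodromy group $M$ and the same distinguished generators. By construction $H_0\le M^2$, so Corollary~\ref{teo16} applies directly to ${\mathcal D}_0$: its $\mathrm{tot}$ invariant is $0$, $1$ or $3$ according to whether $M/M^2$ is trivial, ${\mathbb Z}_2$ or ${\mathbb Z}_2\times{\mathbb Z}_2$. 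Since we are assuming $H\not\le M^2$, the case $M=M^2$ is excluded (it would force $H\le M^2$ vacuously), so only the cases $M/M^2\cong{\mathbb Z}_2$ and $M/M^2\cong{\mathbb Z}_2\times{\mathbb Z}_2$ occur, giving $\mathrm{tot}({\mathcal D}_0)=1$ and $\mathrm{tot}({\mathcal D}_0)=3$ respectively. That settles the first assertion in each of (1) and (2).

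For the value of $\mathrm{tot}({\mathcal D})$ I would invoke Theorem~\ref{teo15} with the original stabiliser $H$. In case (1), $M/M^2\cong{\mathbb Z}_2$, and the hypothesis $H\not\le M^2$ puts us in the last line of part~(2) of that theorem, so ${\mathcal D}$, ${\mathcal D}'$ and ${\mathcal D}''$ are all non-Z-orientable, i.e.\ $\mathrm{tot}({\mathcal D})=0$. In case (2), $M/M^2\cong{\mathbb Z}_2\times{\mathbb Z}_2$, and part~(3) of Theorem~\ref{teo15} tells us that ${\mathcal D}$ is Z-orientable iff $H\le\langle M^2,\sigma_\infty\rangle$, and similarly for ${\mathcal D}'$, ${\mathcal D}''$ with $\sigma_0$, $\sigma_1$; since $\langle M^2,\sigma_0\rangle\cap\langle M^2,\sigma_1\rangle\cap\langle M^2,\sigma_\infty\rangle=M^2$ and $H\not\le M^2$, the three conditions cannot all hold, so $\mathrm{tot}({\mathcal D})\ne 3$; combined with Proposition~\ref{totinvariant} (which rules out $\mathrm{tot}=2$) this gives $\mathrm{tot}({\mathcal D})\in\{0,1\}$.

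It remains to identify the covering ${\mathcal D}_0\to{\mathcal D}$. Since $H_0\le H$ with both sharing the same ambient group and generators, there is a natural covering map of dessins (the map of coset spaces $M/H_0\to M/H$ is $M$-equivariant and compatible with $m_0,m_1,m_\infty$), and its degree is the index $[H:H_0]=[H:H\cap M^2]=[HM^2:M^2]$, which divides $[M:M^2]$. In case (1) this index is $1$ or $2$; it cannot be $1$ because $H\not\le M^2$, so it equals $2$ and ${\mathcal D}_0$ is a two-fold cover of ${\mathcal D}$. In case (2) the index $[HM^2:M^2]$ divides $|{\mathbb Z}_2\times{\mathbb Z}_2|=4$ and is not $1$, so it is $2$ (when $HM^2/M^2$ is one of the three cyclic subgroups of order $2$, equivalently $H\le\langle M^2,\sigma_j\rangle$ for exactly one $j$, matching $\mathrm{tot}({\mathcal D})=1$) or $4$ (when $HM^2=M$, matching $\mathrm{tot}({\mathcal D})=0$); thus ${\mathcal D}_0$ is a two-fold or a ${\mathbb Z}_2\times{\mathbb Z}_2$-fold cover accordingly. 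The only mildly delicate point is the bookkeeping that the covering's deck group is $HM^2/M^2$ rather than merely noting the degree, but this is immediate once one observes that in case~(2) the intermediate subgroup $HM^2$ of index $2$ or $4$ is normal in $M$ (since $M^2$ is normal and $M/M^2$ is abelian), so the covering ${\mathcal D}_0\to{\mathcal D}$ is regular with deck group $HM^2/H_0\cong HM^2/M^2$; I expect this normality observation to be the main thing to get right, everything else being a direct appeal to the already-established Theorem~\ref{teo15} and Corollary~\ref{teo16}.
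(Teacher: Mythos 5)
Your route is the same as the paper's implicit one (the paper states Corollary \ref{cubre} without proof, as an immediate consequence of Theorem \ref{teo15} and Corollary \ref{teo16}): check that $H_0=H\cap M^2$ has trivial core and lies in $M^2$, apply Corollary \ref{teo16} to ${\mathcal D}_0$, apply Theorem \ref{teo15} (plus Proposition \ref{totinvariant}) to ${\mathcal D}$, and compute the covering degree as $[H:H_0]=[HM^2:M^2]$, which is $2$ in case (1) and $2$ or $4$ in case (2); all of this is correct, including the nice observation that degree $2$ corresponds to ${\rm tot}({\mathcal D})=1$ and degree $4$ to ${\rm tot}({\mathcal D})=0$. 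The one step that is off is your identification of the deck group: you justify regularity of ${\mathcal D}_0\to{\mathcal D}$ by the normality of $HM^2$ in $M$ and write the deck group as $HM^2/H_0$. That quotient is not in general a group ($H_0$ need not be normal in $HM^2$) and in any case has the wrong order, and normality of $HM^2$ in $M$ is not what makes this particular cover regular. The correct (one-line) fix: since $M^2\trianglelefteq M$, the subgroup $H_0=H\cap M^2$ is normal in $H$, hence the corresponding inclusion $\Gamma_0=\omega^{-1}(H_0)\trianglelefteq\Gamma=\omega^{-1}(H)$ is normal and the cover ${\mathcal D}_0\to{\mathcal D}$ is regular with deck group $\Gamma/\Gamma_0\cong H/H_0\cong HM^2/M^2\le M/M^2$, an elementary abelian $2$-group whose order equals the degree; this yields ${\mathbb Z}_2$ in case (1), and ${\mathbb Z}_2$ or ${\mathbb Z}_2\times{\mathbb Z}_2$ in case (2), as claimed.
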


\medskip
\begin{exam}
For an example of Case~(1), let $M$ be the dihedral group
\[\langle a, b, c\mid a^2=b^2=c^n=abc=1\rangle\]
of order $2n$ for some odd $n\ge 3$, acting naturally with stabiliser $H=\langle a\rangle$. Then $M^2=\langle c\rangle$, a subgroup of index $2$ which does not contain $H$. The dessin $\mathcal D$ of type $(2,2,n)$ corresponding to the generating triple $(a,b,c)$ is a path of length $n$ in the sphere, with alternating white and black vertices, shown on the left in Figure~\ref{dihedral}. The twisted dessin ${\mathcal D}'$, corresponding to the triple $(b,c,a)$, is shown on the right, and ${\mathcal D}''$ is obtained from ${\mathcal D}'$ by transposing the vertex colours. Clearly none of these dessins is Z-orientable, so ${\rm tot}({\mathcal D})=0$. We have $H_0=H\cap M^{2}=\{1\}$, so ${\mathcal D}_0$ is a regular dessin of type $(2,2,n)$; in fact it is the unique regular dessin of this type, consisting of a circuit of length $2n$ with alternating white and black vertices, embedded in the sphere. It is obviously Z-orientable, whereas $({\mathcal D}_0)'$ and  $({\mathcal D}_0)''$, containing vertices of odd degree $n$, are not, so ${\rm tot}({\mathcal D}_0)=1$.

\begin{figure}[h!]
\begin{center}
\begin{tikzpicture}[scale=0.12, inner sep=0.8mm]

\node (a) at (-30,0) [shape=circle, fill=black] {};
\node (b) at (-25,0) [shape=circle, draw] {};
\node (c) at (-20,0) [shape=circle, fill=black] {};
\node (d) at (-10,0) [shape=circle, draw] {};
\node (e) at (-5,0) [shape=circle, fill=black] {};
\node (f) at (0,0) [shape=circle, draw] {};

\draw [thick] (a) to (b) to (c) to (-17.5,0);
\draw [thick] (f) to (e) to (d) to (-12.5,0);
\draw [thick, dotted] (-16,0) to (-14,0);


\node (A) at (10,0) [shape=circle, fill=black] {};
\node (B) at (15,0) [shape=circle, draw] {};
\node (C) at (20,0) [shape=circle, draw] {};
\node (D) at (25,0) [shape=circle, draw] {};
\node (E) at (35,0) [shape=circle, draw] {};

\draw [thick] (A) to (B);
\draw [thick, dotted] (28.5,0) to (31.5,0);
\draw [thick] (19.65,0.35) arc (60:120:9.2);
\draw [thick] (19.65,-0.35) arc (-60:-120:9.2);
\draw [thick] (24.65,0.35) arc (50:130:11.35);
\draw [thick] (24.65,-0.35) arc (-50:-130:11.35);
\draw [thick] (34.65,0.35) arc (40:140:16.2);
\draw [thick] (34.65,-0.35) arc (-40:-140:16.2);

\end{tikzpicture}
\end{center}
\caption{The dessins $\mathcal D$ and ${\mathcal D}'$} 
\label{dihedral}
\end{figure}
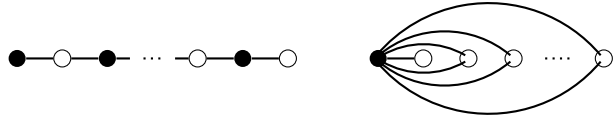

For an example of Case (2), consider the non-Z-orientable uniform dessin ${\mathcal D}$ in Example~\ref{example}. In this case $H$, isomorphic to ${\mathbb Z}_{2} \times {\mathbb Z}_{2}$, is not a subgroup of $M^{2}$, and $M/M^{2}\cong {\mathbb Z}_{2}\times{\mathbb Z}_{2}$. As the dessin ${\mathcal D}'$ differs from $\widetilde{\mathcal D}$ in that example only by transposition of the vertex colours, and as it is Z-orientable, we have ${\rm tot}({\mathcal D})={\rm tot}(\widetilde{\mathcal D})=1$. As in this case $H_{0}=H \cap M^{2} \cong {\mathbb Z}_{2}$ has index two in $H$, the dessins ${\mathcal D}_0$ and $\widetilde{\mathcal D}_0$ give two-fold (unbranched) coverings of the  dessins $\mathcal D$ and $\widetilde{\mathcal D}$, respectively. These are uniform but non-regular dessins of type $(4,4,4)$, passport $(4^4;4^4;4^4)$ and genus $3$. The dessin ${\mathcal D}_0$, with permutation representation
\[\sigma_0=(1,2',3,4')(1',2,3',4)(5,6',7,8')(5',6,7',8),\]
\[\sigma_1=(1,5',2,6')(1',5,2',6)(3,7',4,8')(3',7,4',8),\]
is shown in Figure~\ref{D_0}; the covering ${\mathcal D}_0\to{\mathcal D}$ is given by the mapping $e, e'\mapsto e$ of edge-labels for $e=1,\ldots, 8$. By Theorem~\ref{mainthm}(3), the partition of edges into those labelled $e$ or $e'$ shows that ${\mathcal D}_0$ is Z-orientable.
\end{exam}

\medskip

\begin{figure}[h!]
\begin{center}
\begin{tikzpicture}[scale=0.22, inner sep=0.8mm]

\node (a) at (0,2) [shape=circle, fill=black] {};
\node (a') at (0,-2) [shape=circle, draw] {};
\node (b) at (2.8,4.8) [shape=circle, draw] {};
\node (b') at (2.8,-4.8) [shape=circle, fill=black] {};
\node (c) at (6.8,4.8) [shape=circle, fill=black] {};
\node (c') at (6.8,-4.8) [shape=circle, draw] {};
\node (d) at (9.6,2) [shape=circle, draw] {};
\node (d') at (9.6,-2) [shape=circle, fill=black] {};
\node (e) at (12.4,4.8) [shape=circle, fill=black] {};
\node (e') at (12.4,-4.8) [shape=circle, draw] {};
\node (f) at (16.4,4.8) [shape=circle, draw] {};
\node (f') at (16.4,-4.8) [shape=circle, fill=black] {};
\node (g) at (19.2,2) [shape=circle, fill=black] {};
\node (g') at (19.2,-2) [shape=circle, draw] {};
\node (h) at (22,4.8) [shape=circle, draw] {};
\node (h') at (22,-4.8) [shape=circle, fill=black] {};
\node (i) at (26,4.8) [shape=circle, fill=black] {};
\node (i') at (26,-4.8) [shape=circle, draw] {};
\node (j) at (28.8,2) [shape=circle, draw] {};
\node (j') at (28.8,-2) [shape=circle, fill=black] {};
\node (k) at (31.6,4.8) [shape=circle, fill=black] {};
\node (k') at (31.6,-4.8) [shape=circle, draw] {};
\node (l) at (35.6,4.8) [shape=circle, draw] {};
\node (l') at (35.6,-4.8) [shape=circle, fill=black] {};
\node (m) at (38.4,2) [shape=circle, fill=black] {};
\node (m') at (38.4,-2) [shape=circle, draw] {};

\draw [thick] (a) to (b) to (c) to (d) to (e) to (f) to (g) to (h) to (i) to (j) to (k) to (l) to (m);
\draw [thick] (a') to (b') to (c') to (d') to (e') to (f') to (g') to (h') to (i') to (j') to (k') to (l') to (m');
\draw [thick] (a) to (a');
\draw [thick] (d) to (d');
\draw [thick] (g) to (g');
\draw [thick] (j) to (j');
\draw [thick] (m) to (m');

\node at (-1,0) {$1$};
\node at (39.4,0) {$1$};
\node at (27.8,0) {$2$};
\node at (18.2,0) {$3$};
\node at (8.6,0) {$4$};
\node at (14.4,5.8) {$5$};
\node at (4.8,-5.8) {$5$};
\node at (4.8,5.8) {$6$};
\node at (33.6,-5.8) {$6$};
\node at (33.6,5.8) {$7$};
\node at (24,-5.8) {$7$};
\node at (24,5.8) {$8$};
\node at (14.4,-5.8) {$8$};

\node at (1,4.2) {$2'$};
\node at (10.6,4.2) {$8'$};
\node at (20.2,4.2) {$4'$};
\node at (29.8,4.2) {$6'$};

\node at (0.9,-4.2) {$6'$};
\node at (10.5,-4.2) {$3'$};
\node at (20.1,-4.2) {$8'$};
\node at (29.7,-4.2) {$1'$};

\node at (8.7,4.2) {$7'$};
\node at (18.3,4.2) {$2'$};
\node at (27.9,4.2) {$5'$};
\node at (37.5,4.2) {$4'$};

\node at (8.7,-4.2) {$1'$};
\node at (18.3,-4.2) {$7'$};
\node at (27.9,-4.2) {$3'$};
\node at (37.5,-4.2) {$5'$};

\end{tikzpicture}
\end{center}
\caption{The dessin ${\mathcal D}_0$ of genus $3$} 
\label{D_0}
\end{figure}
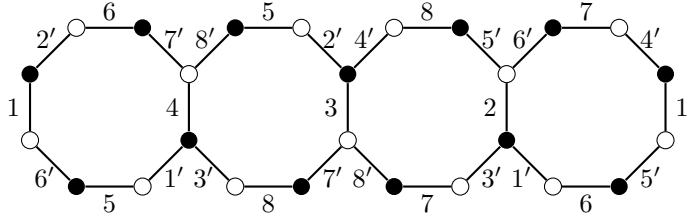


\section{Proof of Theorem~\ref{independent}}
In this last section we will prove Theorem~\ref{independent} by constructing an infinite set of instances of four uniform but non-regular dessins with the same monodromy group and generating triple (as an abstract group), with isomorphic point stabilisers and automorphism groups and with the same passport, but with different Z-orientability properties, so that no two of them are conjugate under the absolute Galois group. This shows that in some circumstances Z-orientability can be used to distinguish dessins in separate Galois orbits when these other properties are insufficient for this purpose. We will describe all our dessins as tuples $(M,H,m_{0},m_{1},m_{\infty})$, as explained in Section~\ref{Sec:dessins}.

The construction starts by taking $M=S_n\times S_n$ for some $n\ge 3$. As $A_n$ has index $2$ in $S_n$ we have $S_n^2\le A_n$, and as $A_n$ is generated by $3$-cycles these subgroups are equal, so $M^{2}=A_{n} \times A_{n}$ and $M/M^{2} \cong {\mathbb Z}_{2}\times {\mathbb Z}_2$. (In fact, it can be seen that $M^{2}$ is the commutator subgroup $M'$ of $M$.)

Let us consider the elements $m_0=(y,z), m_1=(z,x), m_{\infty}=(x,y) \in M$, where
\[x=(1,2,\ldots, n), \quad y=(1,2), \quad  z=(xy)^{-1}=(n,n-1,\dots, 2)\in S_{n}. \]

\medskip
\begin{lemm}\label{generate}
The elements $m_0$, $m_1$ and $m_{\infty}$ generate $M$, and satisfy $m_{0} m_{1} m_{\infty}=1$. They have order $n-1$, $n(n-1)$ and $2n$ if $n$ is odd, and order $2(n-1)$, $n(n-1)$ and $n$ if $n$ is even.
\end{lemm}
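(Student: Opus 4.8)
The plan is to verify the three claimed assertions about the triple $(m_0,m_1,m_\infty)=((y,z),(z,x),(x,y))$ in $M=S_n\times S_n$ in turn: the relation, the generation, and the orders.

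First I would dispose of the relation $m_0m_1m_\infty=1$, which is immediate componentwise: in the first coordinate $y\cdot z\cdot x = y(xy)^{-1}x = yy^{-1}x^{-1}x=1$, and similarly in the second coordinate $z\cdot x\cdot y=(xy)^{-1}xy=1$. (Here one must fix, once and for all, the convention for composing permutations; with the convention used in the paper one checks $xyz=1$ for $x=(1,2,\dots,n)$, $y=(1,2)$, $z=(xy)^{-1}$, which is exactly the standard triangle-group presentation of $S_n$ as $\Delta(n,2,n-1)$ — so $z=(n,n-1,\dots,2)$ is an $(n-1)$-cycle. This is the well-known $(n,2,n-1)$ generating triple for $S_n$.)

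Next, for the generation claim, let $G=\langle m_0,m_1,m_\infty\rangle\le S_n\times S_n$. The two coordinate projections of $G$ are $\langle y,z,x\rangle=S_n$ and $\langle z,x,y\rangle=S_n$, so $G$ is a subdirect product of $S_n\times S_n$. By Goursat's lemma, such a subdirect product is either all of $S_n\times S_n$ or the graph of an automorphism of $S_n$ (using that $A_n$ is simple and is the unique minimal normal subgroup, so the only proper normal subgroups of $S_n$ are $1$ and $A_n$, and $S_n/A_n\cong\mathbb Z_2$ has trivial or index-$2$ correspondence subgroups; for $n\ge 3$, $n\ne 4,6$ all automorphisms are inner, and for $n=4,6$ one argues directly). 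To rule out the graph case one exhibits an element of $G$ whose two coordinates are not related by any automorphism — for instance, compare $m_0=(y,z)$: its first coordinate $y$ is a transposition while its second coordinate $z$ is an $(n-1)$-cycle, and no automorphism of $S_n$ sends a transposition to an $(n-1)$-cycle when $n\ge 4$ (they have different cycle types and, even for the exceptional outer automorphism of $S_6$, transpositions go to products of three transpositions). Since a graph subgroup would force every element $(u,v)\in G$ to satisfy $v=\phi(u)$ for one fixed $\phi$, this contradiction gives $G=M$. The small cases $n=3$ can be checked by hand.

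Finally, the order computation: the order of $m_0=(y,z)$ is $\mathrm{lcm}(|y|,|z|)=\mathrm{lcm}(2,n-1)$, which is $n-1$ for $n$ odd and $2(n-1)$ for $n$ even; the order of $m_1=(z,x)$ is $\mathrm{lcm}(n-1,n)=n(n-1)$ since consecutive integers are coprime; and the order of $m_\infty=(x,y)$ is $\mathrm{lcm}(n,2)$, which is $2n$ for $n$ odd and $n$ for $n$ even. I expect the only genuinely delicate point to be the Goursat/subdirect-product step — specifically handling the exceptional outer automorphism of $S_6$ and confirming that in no case can $G$ degenerate to a graph subgroup; everything else is a routine cycle-type bookkeeping.
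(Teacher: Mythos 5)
Your verification of the relation $m_0m_1m_\infty=1$ and of the orders is correct and agrees with the paper. The generation step, however, has a genuine gap: the Goursat dichotomy you state is incomplete. For $n\ge 5$ a subgroup $G\le S_n\times S_n$ with both coordinate projections surjective corresponds to normal subgroups $N_1,N_2\trianglelefteq S_n$ with $S_n/N_1\cong S_n/N_2$, and besides the two cases you list ($N_1=N_2=S_n$, giving the full product, and $N_1=N_2=1$, giving the graph of an automorphism) there is a third: $N_1=N_2=A_n$, giving the index-two subgroup $(A_n\times A_n)\cup\bigl((S_n\setminus A_n)\times (S_n\setminus A_n)\bigr)$ consisting of all pairs of permutations of equal parity. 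Your parenthetical remark about ``index-$2$ correspondence subgroups'' gestures at this case, but your concrete argument (the first coordinate of $m_0$ is a transposition while its second is an $(n-1)$-cycle, and no automorphism can match them) only excludes the graph case; it says nothing about the equal-parity subgroup, which of course contains many elements whose two coordinates have different cycle types. So as written you have not shown $G=M$.

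The gap is easy to close: $m_1=(z,x)$ has coordinates of opposite parity, since an $(n-1)$-cycle has sign $(-1)^{n-2}$ while an $n$-cycle has sign $(-1)^{n-1}$, so $G$ is not contained in the equal-parity subgroup; together with your exclusion of graph subgroups this gives $G=M$ for $n\ge 5$, with the small values of $n$ (where $S_n$ has extra normal subgroups, e.g.\ $n=4$) treated separately as you indicate. Note also that the paper avoids Goursat entirely by a more elementary computation: for $n$ odd, $m_1^{\,n}=(z,1)$ and $m_\infty^{\,n+1}=(x,1)$, so the generated subgroup contains $S_n\times\{1\}$, and $m_1^{\,n-1}=(1,x^{-1})$, $m_\infty^{\,n}=(1,y)$ give $\{1\}\times S_n$ (with analogous powers of $m_0$ and $m_1$ for $n$ even); that argument needs no case analysis on normal subgroups or automorphisms of $S_n$, and if you want to keep the Goursat route you must in any case add the parity step above.
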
 
\begin{proof}
Note that any two of $x, y$ and $z$ generate $S_n$, and $xyz=1$, so the elements $m_j$ satisfy $m_{0} m_{1} m_{\infty}=1$. Their orders follow from the fact that $x$, $y$ and $z$ have order $n$, $2$ and $n-1$.
Let $S$ denote the subgroup of $M$ which the elements $m_j$ generate. If $n$ is odd then $S$ contains $m_{1}^{n}=(z^n,x^n)=(z,1)$ and $m_{\infty}^{n+1}=(x^{n+1},y^{n+1})=(x,1)$, so $S$ contains the first direct factor $S_n\times\{1\}$ since $x$ and $z$ generate $S_n$. Similarly, since $m_{1}^{n-1}=(1,x^{-1})$ and $m_{\infty}^n=(1,y)$ we have $S\ge \{1\}\times S_n$, so $S=M$. The proof of this is similar if $n$ is even, using suitable powers of $m_0$ and $m_1$.
\end{proof}

\medskip

Let us define the following index two subgroups $M_{0}=\langle M^{2}, m_{0}\rangle$, $M_{1}=\langle M^{2},m_{1}\rangle$ and $M_{\infty}=\langle M^{2},m_{\infty}\rangle$ of $M$. Any two of them intersect in $M^{2}=A_n\times A_n$, and each $m_{j}$ lies in just one of them, namely $M_{j}$. If $n$ is odd then $M_j\; (j=0, 1, \infty)$ consists of the pairs $(g,h)$ for which $gh$, $h$ or $g$ is an even permutation, respectively; if $n$ is even the corresponding conditions are that $h$, $g$ or $gh$ are even. Theorem~\ref{teo15}(3) implies the following:

\medskip
\begin{lemm} \label{le:condition}
Let $H$ be a subgroup of $M$ with trivial core, and let ${\mathcal D}$ be the dessin $(M,H,m_{0},m_{1},m_{\infty})$. 
 Then the dessins $\mathcal{D}, \mathcal{D}', \mathcal{D}''$ are Z-orientable if and only if $H$ is a subgroup of $M_{\infty}$, $M_0$ or $M_1$ respectively. 
\end{lemm}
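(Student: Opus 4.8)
The plan is to apply Theorem~\ref{teo15}(3) directly, once we check that its hypothesis is met. Since $M = S_n \times S_n$, we have already established in the preceding paragraph that $M/M^2 \cong \mathbb{Z}_2 \times \mathbb{Z}_2$, with $M^2 = A_n \times A_n$. Thus Case~(3) of Theorem~\ref{teo15} is the relevant one, and it asserts that $\mathcal{D}$ is Z-orientable if and only if $H \le \langle M^2, \sigma_\infty \rangle$, that $\mathcal{D}'$ is Z-orientable if and only if $H \le \langle M^2, \sigma_0 \rangle$, and that $\mathcal{D}''$ is Z-orientable if and only if $H \le \langle M^2, \sigma_1 \rangle$.

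The only remaining task is a translation of notation. In the $5$-tuple description $(M, H, m_0, m_1, m_\infty)$, the permutation representation is obtained by letting $M$ act on the cosets $M/H$, so that $\sigma_0 = \varphi(m_0)$, $\sigma_1 = \varphi(m_1)$ and $\sigma_\infty = \varphi(m_\infty)$, where $\varphi : M \to \mathrm{Sym}(M/H)$ is the coset action. Since $H$ has trivial core, $\varphi$ is injective, so I may identify $M$ with its image and simply write $\sigma_j = m_j$. Under this identification, $\langle M^2, \sigma_0 \rangle = M_0$, $\langle M^2, \sigma_1 \rangle = M_1$ and $\langle M^2, \sigma_\infty \rangle = M_\infty$ by the very definition of these subgroups given just above the lemma. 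Substituting these equalities into the conclusion of Theorem~\ref{teo15}(3) yields exactly the statement of Lemma~\ref{le:condition}: $\mathcal{D}$ is Z-orientable iff $H \le M_\infty$, $\mathcal{D}'$ iff $H \le M_0$, and $\mathcal{D}''$ iff $H \le M_1$.

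There is essentially no obstacle here; the lemma is a bookkeeping specialization of the general theorem. The one point that deserves a sentence of care is confirming that the cyclic permutation of the triple $(\sigma_0, \sigma_1, \sigma_\infty)$ that defines $\mathcal{D}'$ and $\mathcal{D}''$ in Section~\ref{Sec:dessins} matches the indexing convention used in the statement of Theorem~\ref{teo15}(3), so that the correspondence of $\mathcal{D}'$ with $M_0$ (rather than $M_1$) is the correct one. Since $\mathcal{D}'$ corresponds to the triple $(\sigma_1, \sigma_\infty, \sigma_0)$, its "third generator" is $\sigma_0$, so its Z-orientability is governed by $\langle M^2, \sigma_0 \rangle = M_0$; similarly $\mathcal{D}''$ corresponds to $(\sigma_\infty, \sigma_0, \sigma_1)$ with third generator $\sigma_1$, giving $M_1$. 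This matches the claim, and the proof is complete.
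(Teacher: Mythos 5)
Your proposal is correct and follows exactly the route the paper intends: the paper states the lemma as an immediate consequence of Theorem~\ref{teo15}(3), and your argument simply spells out the bookkeeping — $M^2=A_n\times A_n$ gives $M/M^2\cong{\mathbb Z}_2\times{\mathbb Z}_2$, the trivial core makes the coset action faithful so $\sigma_j$ can be identified with $m_j$, and then $\langle M^2,\sigma_j\rangle=M_j$ translates the three conditions of the theorem into those of the lemma. The extra check that the cyclic-shift convention for ${\mathcal D}'$ and ${\mathcal D}''$ matches the indexing in Theorem~\ref{teo15}(3) is a sensible sanity verification and is consistent with the paper.
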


\medskip

We will now construct four subgroups $H_{i}$ ($i=0,1,2,3$) of $M$, each one with trivial core, and will apply this lemma to the dessins ${\mathcal D}_{i}=(M,H_{i},m_{0},m_{1},m_{\infty})$. The construction proceeds in several steps, as follows.

The rotation group $Q$ of a cube is isomorphic to $S_4$, through its faithful action on the four main diagonals, and its isometry group $G$ has the form $Q\times I$, where $I\;(\cong S_2)$ is generated by the antipodal isometry $i$. The faithful action of $G$ on the faces of the cube gives an embedding of $G$ in $S_6$, with $Q$ acting transitively. In addition to $Q$, there is another subgroup $P\cong S_4$ in $G$, also acting transitively on the faces: this is the subgroup $Q'\cup (Q\setminus Q')i$ preserving the two tetrahedra inscribed in the cube, and acting as the isometry group of each. (Here $Q'\;(\cong A_4)$ is the commutator subgroup of $Q$.) Although $P$ and $Q$ are isomorphic subgroups of $S_6$, they are not conjugate in $S_6$: for instance, their elements of order $4$ have cycle structures $2,4$ and $1,1,4$ respectively. Having index $2$ in $G$, both $P$ and $Q$ are normal in $G$, so their normalisers in $S_6$ contain $G$. They are not normal in $S_6$, and $G$ is a maximal subgroup of $S_6$ (it has index $15$, and $S_6$ has no subgroups of index $3$ or $5$), so they have the same normaliser $N_{S_6}(P)=N_{S_6}(Q)=G$. (Note that $P$ and $Q$ are not equivalent under the non-trivial outer automorphism of $S_6$, which  instead transposes them with intransitive subgroups of $S_6$.)

For any $n\ge 6$ let $A$ and $B$ be subgroups of $S_n$ isomorphic to $S_4$, acting as $P$ or $Q$ on a set of six points and fixing the remaining $n-6$ points. Thus $A$ and $B$ are isomorphic but non-conjugate subgroups of index $n!/4!$ in $S_n$. It is easy to check that $A \le A_n$, whereas $B\not\le A_n$: for example, an element of order $4$ in $A$ or $B$ is respectively even or odd. We have $N_{S_n}(A)=N_{S_n}(B)=G\times S_{n-6}$, with the direct factors arising from the points moved and fixed by $A$ and $B$, so $N_{S_n}(A)/A \cong N_{S_n}(B)/B \cong S_2\times S_{n-6}$.

As point stabilisers for the four dessins, define $H_0:=B\times B$, $H_1:=B\times A$, $H_2:=A\times B$, and $H_3:=A\times A$, mutually isomorphic but non-conjugate subgroups of index $N:=(n!/4!)^2$ in $M$. Note that $H_1$ and $H_2$ are each contained in a unique subgroup $M_{j}$ of index $2$ in $M$, namely $M_1$ and $M_{\infty}$ respectively if $n$ is odd, and $M_0$ and $M_1$ if $n$ is even, while $H_3$ is contained in all three of them and $H_0$ in none. Lemma~\ref{le:condition} now implies that the dessins ${\mathcal D}_{i}=(M,H_{i},m_{0},m_{1},m_{\infty})$ satisfy ${\rm tot}(\mathcal{D}_i)=0,1, 1$ and $3$ for $i=0,1,2$ and $3$. Moreover, if $n$ is odd then since $H_2\le M_{\infty}$ whereas $H_1\not\le M_{\infty}$, ${\mathcal D}_2$ is Z-orientable whereas ${\mathcal D}_1$ is not; similarly, if $n$ is even then ${\mathcal D}_1'$ is Z-orientable whereas ${\mathcal D}_2'$ is not. Thus these four dessins have different Z-orientability properties, so they lie in distinct Galois orbits.

We now consider the monodromy and automorphism groups of these dessins. For each $i=0, 1, 2, 3$, the natural action of $M$ on the cosets of the subgroup $H_i$ yields an embedding $\varphi_i : M \to \mathrm{Symm}(M/H_i) \cong S_N$  which allows us to regard $M$ as a transitive subgroup of $S_N$. For each $j=0, 1, \infty$ define $\sigma_j$ to be the permutation $\varphi_i(m_j)\in S_N$ induced by $m_j$, so that the triple $(\sigma_0, \sigma_1, \sigma_{\infty})$ determines a dessin $\mathcal{D}_i$ with monodromy group $M$.
As $H_i$ is the stabiliser of an edge, each dessin ${\mathcal D}_i$ has automorphism group
\[{\rm Aut}({\mathcal D}_{i}) \cong C_{S_N}(\varphi_i(M)) \cong N_M(H_i)/H_i\]
where $C$ and $N$ denote centraliser and normaliser (see~\cite[Corollary 2.1]{JW}). Now each $H_i$ has the form $X\times Y\le M=S_n\times S_n$, where $X, Y\in\{A, B\}$, so $N_M(H_i)=N_{S_n}(X)\times N_{S_n}(Y)$, and hence
\[N_M(H_i)/H_i\cong (N_{S_n}(X)/X)\times (N_{S_n}(Y)/Y).\]
We saw earlier that $N_{S_n}(A)/A \cong N_{S_n}(B)/B \cong S_2\times S_{n-6}$, so
\[{\rm Aut}({\mathcal D}_i)\cong (S_2\times S_{n-6})\times (S_2\times S_{n-6})\]
for each $i$.

By construction the four dessins ${\mathcal D}_i$ all have the same monodromy group and generating triple, and have isomorphic point stabilisers and automorphism groups, so it remains only to show that they have the same passport. This is an immediate consequence of the following:

\medskip
\begin{lemm}\label{uniform}
For each $n\ge 8$ the dessins ${\mathcal D}_i\; (i=0,\ldots, 3)$ are all uniform, having type $(n-1, n(n-1), 2n)$ or $(2(n-1), n(n-1), n)$ as $n$ is odd or even.
\end{lemm}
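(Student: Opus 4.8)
The goal is to show that each $\mathcal D_i=(M,H_i,m_0,m_1,m_\infty)$ is uniform, meaning that all white vertices have the same degree, all black vertices have the same degree, and all faces have the same degree; equivalently, that the permutation $\sigma_0=\varphi_i(m_0)$ on $M/H_i$ has all cycles of the same length, and likewise for $\sigma_1=\varphi_i(m_1)$ and $\sigma_\infty=\varphi_i(m_\infty)$. Since the cycle lengths of $\varphi_i(m_j)$ acting on cosets of $H_i$ are exactly the numbers $|\langle m_j\rangle|/|\langle m_j\rangle\cap gH_ig^{-1}|$ as $g$ ranges over double coset representatives, uniformity of $\mathcal D_i$ with respect to the vertex/face coming from $m_j$ is equivalent to the statement that $\langle m_j\rangle$ meets \emph{every} conjugate of $H_i$ trivially, i.e. $\langle m_j\rangle\cap gH_ig^{-1}=\{1\}$ for all $g\in M$. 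So the whole lemma reduces to the purely group-theoretic claim: for each $i$ and each $j\in\{0,1,\infty\}$, no nontrivial power of $m_j$ is conjugate into $H_i$.

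First I would record the structure of $H_i$ and of the cyclic groups $\langle m_j\rangle$. Each $H_i$ has the form $X\times Y$ with $X,Y\in\{A,B\}$, where $A,B\le S_n$ are copies of $S_4$ acting on six points and fixing $n-6$ points; so any conjugate $gH_ig^{-1}$ has the form $X'\times Y'$ where $X',Y'$ are conjugates of $A$ or $B$ in $S_n$, each of which is isomorphic to $S_4$, moves exactly six points and fixes $n-6$. The group $\langle m_0\rangle=\langle(y,z)\rangle$, $\langle m_1\rangle=\langle(z,x)\rangle$, $\langle m_\infty\rangle=\langle(x,y)\rangle$ are cyclic of the orders computed in Lemma~\ref{generate}. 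A nontrivial element $(u^a,v^b)\in\langle m_j\rangle$ (for appropriate $u,v$) lies in some $X'\times Y'$ iff $u^a\in X'$ and $v^b\in Y'$; since $X',Y'$ are six-point copies of $S_4$, this forces $u^a$ and $v^b$ each to be a permutation supported on at most six points and of order dividing $4$ (the exponent of $S_4$ is $12$, but the element orders are $1,2,3,4$). The key point will be the interplay between $x=(1,2,\dots,n)$, $y=(1,2)$, $z=(n,\dots,2)$ and these constraints: powers of the $n$-cycle $x$ are never supported on few points unless trivial (for $n\ge 8$ no nontrivial power of an $n$-cycle moves $\le 6$ points), powers of $z$ (an $(n-1)$-cycle) similarly, and the only powers of $y$ are $1$ and $y$ itself (which moves $2$ points but is fine only if the other coordinate is also trivial). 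Going through the coordinates of $m_0,m_1,m_\infty$ case by case, the ``dangerous'' coordinate is always a power of $x$ or of $z$, and for $n\ge 8$ the only such power moving at most six points is the identity; chasing this forces the whole element $(u^a,v^b)$ to be trivial. This is where the hypothesis $n\ge 8$ (rather than $n\ge 6$) is used.

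Concretely, the main steps are: (i) reduce uniformity of $\mathcal D_i$ to the three statements $\langle m_j\rangle\cap gH_ig^{-1}=\{1\}$ for all $g\in M$, $j\in\{0,1,\infty\}$, using the orbit--stabiliser description of cycle lengths; (ii) observe that every conjugate of $H_i$ is of the form $X'\times Y'$ with $X',Y'$ six-point copies of $S_4$ inside $S_n$, so any element of such a conjugate has both coordinates supported on $\le 6$ points; (iii) for each $j$, examine the two coordinates of a hypothetical nontrivial element of $\langle m_j\rangle$ lying in a conjugate of $H_i$: one coordinate is always a power of the $n$-cycle $x$ or of the $(n-1)$-cycle $z$, and for $n\ge 8$ such a power supported on $\le 6$ points must be the identity, which (since $\langle m_j\rangle$ is generated by a pair whose second, or first, coordinate then also collapses) forces the element to be $1$; (iv) conclude uniformity, and read off the type from the orders of $m_0,m_1,m_\infty$ given in Lemma~\ref{generate} (these orders are precisely the common cycle lengths, hence the common degrees of white vertices, black vertices and faces), distinguishing the cases $n$ odd and $n$ even. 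The passport then has all white degrees equal to the order of $m_0$ (with multiplicity $N/\mathrm{ord}(m_0)$), and similarly for black vertices and faces, so it is the same for all four dessins.

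\textbf{Main obstacle.} The only real work is step (iii): checking in each of the three cases $j=0,1,\infty$, and for a general nontrivial power of $m_j$, that at least one coordinate is a power of an $n$- or $(n-1)$-cycle moving more than six points unless the power is trivial, and that this in turn forces the other coordinate to be trivial too. This is a finite but slightly fiddly case analysis; the role of the bound $n\ge 8$ is exactly to guarantee that $n-2>6$ (so that no proper nontrivial power of the $n$-cycle $x$, and no nontrivial power of the $(n-1)$-cycle $z$, can be supported on six points), which is what makes the argument go through. Everything else — the reduction in (i), the shape of conjugates in (ii), and extracting the type and passport in (iv) — is routine.
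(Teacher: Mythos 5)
Your reduction in step (i) and the overall strategy (show that no non-identity power of any $m_j$ is conjugate into any $H_i$, then read off the type from Lemma~\ref{generate}) are exactly the paper's, and your treatment of $m_1=(z,x)$ is fine. But step (iii) has a genuine gap for $m_0$ and $m_\infty$: the claim that once the $x$- or $z$-coordinate of a power collapses ``the other coordinate also collapses'' is false in one parity. Concretely, if $n$ is even then $m_0^{\,n-1}=(y,1)$, and if $n$ is odd then $m_\infty^{\,n}=(1,y)$; these are non-identity powers of the generators (they account for the factor $2$ in the orders $2(n-1)$, resp.\ $2n$, in Lemma~\ref{generate}), and both coordinates are supported on at most $6$ points and have order dividing $4$. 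So the criteria you actually invoke --- ``support $\le 6$ and element order in $\{1,2,3,4\}$'' --- cannot exclude them, and excluding them is essential: if, say, $(1,y)$ were conjugate into $H_i$, then $\sigma_\infty$ would have a cycle of length properly dividing $2n$ and the dessin would fail to be uniform of the stated type.

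What is missing is the finer structure of $A$ and $B$: they are not arbitrary copies of $S_4$ moving six points, but conjugates of the cube groups $P,Q$ acting transitively on the six points, and these contain no element moving exactly two of those points (equivalently, every non-identity element of $A$ or $B$ fixes $n-4$ or $n-6$ of the $n$ points, never $n-2$). This is precisely how the paper closes the case you leave open: it compares fixed-point counts, noting that a non-identity power of $x$, $y$ or $z$ fixes $0$, $n-2$ or $1$ points respectively, none of which equals $n-4$ or $n-6$ when $n\ge 8$, so in particular the transposition $y$ is not conjugate into $A$ or $B$ and the elements $(y,1)$, $(1,y)$ are ruled out. Adding this fixed-point (or cycle-structure) argument repairs your step (iii); also note that the bound you actually need for the long cycles is $n-1>6$, i.e.\ $n\ge 8$, not ``$n-2>6$''.
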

\begin{proof} It is sufficient to show that no non-identity power of any generator $m_{j}\;(j=0, 1, {\infty})$ of $M$ is conjugate to an element of any subgroup $H_i$, for in that case  the dessins ${\mathcal D}_i$ will all be uniform, with the stated type. This can be seen as follows: if in the cycle decomposition of one of the generators $\sigma_j=\varphi_i(m_j) \in S_N$ we had a cycle of length $k$ less than its order, then $\sigma_{j}^k$ would have to fix some point in $\{1, 2, \ldots , N \}$, and by the transitivity of $M$ would have to lie in a conjugate of the corresponding subgroup $H_i$. Now by inspection of the cube each non-identity element of $P$ or $Q$ fixes either two or none of the six points it permutes, so each non-identity element of $A$ or $B$ fixes either $n-4$ or $n-6$ points of the $n$ points permuted. On the other hand, a non-identity power of $x, y$ or $z$ fixes none, $n-2$, or $1$, so for $n\ge 8$ a non-identity power of $m_{j}$ cannot be conjugate to an element of any $H_i$.  
\end{proof}

\medskip

It follows that for odd $n\ge 9$ and for each dessin ${\mathcal D}_i$, $\sigma_0$ has $N/(n-1)$ cycles of length $(n-1)$, while $\sigma_1$ has $N/n(n-1)$ cycles of length $n(n-1)$ and $\sigma_{\infty}$ has $N/2n$ cycles of length $2n$, with a similar result for even $n\ge 8$. In particular, these four dessins all have the same passport, as claimed. (And hence, of course, they have the same genus.) This completes the proof of Theorem~\ref{independent}.

\medskip
\begin{rema}
Among the dessins ${\mathcal D}_i$ described above, the least degree $N$ arises for $n=8$, so that
\[N=\left({8!}/{4!}\right)^2=2,822,400.\]
In this case the dessins have type $(14, 56, 8)$ and genus $1,108,801$.
The authors regret that they are unable to provide drawings of these dessins.
\end{rema}

\medskip
\begin{rema}
The construction is also valid for $n=6$ and $n=7$. The dessins have the same properties as before, except that they are not uniform, and do not all have the same passport (or genus). For example, when $n=6$, so that the dessins have degree $900$ and type $(10, 30, 6)$:
\begin{itemize}
\item ${\mathcal D}_0$ has passport $(10^{90}; 10^{18}15^{24}30^{12}; 2^{90}3^{120}6^{60})$ and genus $244$;
\item ${\mathcal D}_1$ has passport $(10^{90}; 10^{18}30^{24}; 2^{90}3^{120}6^{60})$ and genus $250$;
\item ${\mathcal D}_2$ has passport $(10^{90}; 10^{18}15^{24}30^{12}; 2^{90}6^{120})$ and genus $274$;
\item ${\mathcal D}_3$ has passport $(10^{90}; 10^{18}30^{24}; 2^{90}6^{120})$ and genus $280$.
\end{itemize}
When $n=7$, so that the dessins have degree $44100$ and type $(6, 42, 14)$:
\begin{itemize}
\item ${\mathcal D}_0$ and ${\mathcal D}_1$ have passport $(3^{420}6^{7140}; 14^{90}21^{120}42^{960}; 14^{3150})$ and genus $16111$;
\item ${\mathcal D}_2$ and ${\mathcal D}_3$ have passport $(3^{420}6^{7140}; 14^{90}42^{1020}; 14^{3150})$ and genus $16141$.
\end{itemize}
These passports can all be found by using the fact that in any transitive permutation group $M$, with point stabiliser $H$, the number of fixed points of an element $m \in M$ is $|m^M\cap H|.|C_M(m)|/|H|$, where $m^M$ denotes the conjugacy class of $m$ in $M$ and $C_M(m)$ denotes its centraliser. By applying this to various powers of $m$ one can find how many cycles it has of each length dividing its order. Here we take $H=H_i$ for $i=0,\ldots, 3$, and $m=m_j$ for $j=0, 1, \infty$.
\end{rema}

\medskip
\begin{rema}
For each $n\ge 6$, the four dessins ${\mathcal D}_i$ all have the same minimal regular cover, namely the regular dessin with monodromy and automorphism group $M=S_n\times S_n$ and generating triple $(m_0, m_1, m_{\infty})$. This has degree $(n!)^2$, and is a regular $576$-sheeted covering, with covering group $H_i\cong S_4\times S_4$, of each ${\mathcal D}_i$. These coverings are smooth for $n\ge 8$, but branched for $n=6$ and $7$.

\begin{figure}[h!]
\begin{center}
\begin{tikzpicture}[scale=0.25, inner sep=0.8mm]

\node (a) at (-20,0) [shape=circle, fill=black] {};
\node (b) at (-15,0) [shape=circle, draw] {};
\node (c) at (-10,0) [shape=circle, fill=black] {};
\node (d) at (-17.5,4) [shape=circle, draw] {};
\node (e) at (-22.5,4) [shape=circle, draw] {};
\node (f) at (-22.5,-4) [shape=circle, draw] {};
\node (g) at (-17.5,-4) [shape=circle, draw] {};

\draw [thick] (a) to (b) to (c);
\draw [thick] (a) to (d);
\draw [thick] (a) to (e);
\draw [thick] (a) to (f);
\draw [thick] (a) to (g);

\draw [thick, dotted] (-22.5,2) arc (135:225:3);


\node (A) at (-4,0) [shape=circle, draw] {};
\node (B) at (4,0) [shape=circle, fill=black] {};
\node (C) at (9,0) [shape=circle, draw] {};
\draw [thick] (B) to (C);
\draw [thick, dotted] (0,0.5) to (0,-0.5);

\draw [thick] (4,0) arc (30:147:4.6);
\draw [thick] (4,0) arc (-30:-147:4.6);
\draw [thick] (4,0) arc (45:133:5.6);
\draw [thick] (4,0) arc (-45:-133:5.6);
\draw [thick] (4,0) arc (60:119:7.8);
\draw [thick] (4,0) arc (-60:-119:7.8);

\end{tikzpicture}
\end{center}
\caption{Two twist-related planar dessins of degree $n$} 
\label{planar}
\end{figure}
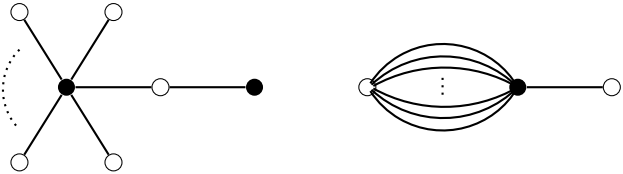

This regular dessin may seem rather complicated, but it is simply the minimal common regular cover of the pair of planar dessins of degree $n$ with monodromy group $S_n$, acting naturally, and generating triples $(y,z,x)$ and $(z,x,y)$ given by the first and second coordinates of the triple $(m_j)$ generating $M$. These twist-related dessins are shown on the left and right in Figure~\ref{planar}; the labelling of edges is obvious, and therefore omitted.
\end{rema}

\bigskip
\noindent
{\bf Aknowledgment:} The authors would like to thank the referee for her/his valuable comments, suggestions and corrections. 

\bigskip
\noindent
{\bf Thanks:} This work was partially supported by Project MTM2016-79497-P, Project SEV-2015-0554,  Project Fondecyt 1150003 and Project Anillo ACT1415 PIA-CONICYT

\end{document}